\def\real{\mathbb R}
\def\complex{\mathbb C}
\def\Hilbert{{\mathbb H}}
\def\graph{{\mathcal G}}
\def\gbar{\overline{\mathcal G}}
\def\vertexset{{\mathcal V}_{\graph}}
\def\edgeset{{\mathcal E}_{\graph}}
\def\alg{{\mathcal A}}
\def\Laplace{{-D^2}}
\def\dop{{\mathcal L}}
\def\domain{{\mathcal D}}
\title{Robin boundary conditions \\
for the Laplacian \\
on metric graph completions}
\author{Robert Carlson
\\Department of Mathematics
\\University of Colorado at Colorado Springs
\\Colorado Springs, Colorado USA
\\rcarlson@uccs.edu}
\date{2021}
\begin{document}

%       Theorem environments

\newtheorem{thm}{Theorem}[section]
\newtheorem{cor}[thm]{Corollary}
\newtheorem{lem}[thm]{Lemma}
\newtheorem{prop}[thm]{Proposition}
\newtheorem{ax}{Axiom}

\theoremstyle{definition}
\newtheorem{defn}{Definition}[section]

\theoremstyle{remark}
\newtheorem{rem}{Remark}[section]
\newtheorem*{notation}{Notation}

%\numberwithin{equation}{section}

\newcommand{\thmref}[1]{Theorem~\ref{#1}}
\newcommand{\secref}[1]{\S\ref{#1}}
\newcommand{\lemref}[1]{Lemma~\ref{#1}}
\newcommand{\propref}[1]{Proposition~\ref{#1}}
\newcommand{\corref}[1]{Corollary~\ref{#1}}
\newcommand{\figref}[1]{Figure~\ref{#1}}

\numberwithin{equation}{section}

%******************

\maketitle

\begin{abstract}

A generalization of Robin boundary conditions leading to self-adjoint operators is developed for the second derivative operator
on metric graphs with compact completion and totally disconnected boundary.  Harmonic functions and 
their properties play an essential role.

\end{abstract}

Keywords:  quantum graph, harmonic functions on graphs, boundary value problems

MSC-class: 34B45 (primary), 47E05 (secondary)

\newpage

\section{Introduction}

Let $\graph $ denote a connected, locally finite metric graph with a countable vertex set $\vertexset$ and edge set $\edgeset$. 
Edges $e \in \edgeset $ are assigned a length $l_e$ and are identified with intervals $[a_e,b_e]$ of length $l_e$.  
With the usual geodesic distance, $\graph $ becomes a metric space.

The boundary $\partial \graph$ of $\graph $ will be a subset of vertices, including all vertices with degree $1$.
The interior $\graph _{int}$ of $\graph $ will be the complement of the boundary vertices.
As a metric space, ${\graph}$ has a completion $\gbar $; the boundary of $\gbar $ will be the complement of the graph interior, 
$\partial \gbar  = \gbar  \setminus \graph _{int}$.  

The boundary and function theory developed here are based on two essential assumptions: $\gbar$ is compact and $\partial \gbar $ is totally disconnected.
Simple examples, with boundary homeomorphic to the Cantor set, may be constructed from homogeneous trees with decaying edge lengths.  
A sufficient (but not necessary) condition for these properties is that the volume, which is the sum of the edge lengths, is finite.  
In the finite volume cases $\gbar $ is \cite{Georg11} the end compactification of $\graph $.

When $\graph $ is finite, various authors have characterized boundary conditions leading to a self-adjoint Laplace differential operator $\Laplace$ on $L^2(\graph )$.  
When $\graph $ is infinite and the edge lengths of have a positive lower bound, the existence of a unique self-adjoint extension of a 'minimal' symmetric operator $\Laplace $ is common.  A useful discussion and numerous references are in \cite{BK}; a more recent source with additional information is \cite{Exner18}.
 
For infinite graphs having compact completions $\gbar $ with totally disconnected boundary $\partial \gbar $, 'minimal' symmetric operators $\Laplace $ on $L^2(\graph )$
satisfying standard Kerckhoff conditions at interior vertices may have many distinct self-adjoint extensions.  
Recent works addressing related questions about self-adjoint operators include \cite{GHKLW} and \cite{KMN}.
Physical models motivate a search for such extensions characterized by 'boundary conditions'.  
This search leads to novel problems, especially when the boundary conditions describe behavior at points in $\partial \gbar $
that are not vertices of  $\graph $.
Some of these boundary conditions and corresponding operators were described in \cite{Carlson08}.  Initial domains there consisted of functions which either 
(i) vanished outside compact subsets of $\graph _{int}$, 
or (ii) had derivatives vanishing outside compact subsets of $\graph _{int}$.  Such domains extend the classical Dirichlet or Neumann boundary conditions.
The symmetric operators $\Laplace $ with these domain are nonnegative, so have self-adjoint Friedrichs extensions.

The main goal of this work is to identify and study a suitable generalization of 'mixed' or Robin boundary conditions leading to self-adjoint Laplace operators.
Consider the classical second derivative operator $-D^2$ acting on  $L_{\real }^2[0,1]$ with the boundary conditions $f'(0) = \alpha f(0)$ and $f'(1) = \beta f(1)$.
To satisfy these boundary conditions, start with a domain consisting of smooth functions which have the form $ c_0(\alpha x + 1)$ in some neighborhood of $x=0$, and   
$c_1(\beta x + 1 - \beta )$ near $x=1$.  This domain, which is a core for a self-adjoint operator, is defined with the aid of functions which are harmonic near the boundary. 
This simple example will be generalized to build domains for symmetric and self-adjoint operators $\Laplace$ on $L^2(\graph )$.
Results describing the existence and properties of harmonic functions on $\gbar $ play an essential role. 

The results are developed in three subsequent sections.  Section 2 begins with a review of basic material on metric graphs.
Some results about compact totally disconnected metric spaces such as $\partial \gbar $ are then presented, along with a theorem which
links the totally disconnected boundary with a 'weakly connected' condition for $\gbar $ which appeared in \cite{Carlson08}.
Section 3 treats the existence and properties of harmonic functions on $\gbar$.  The introduction of energy spaces 
provides a new approach to solving the Dirichlet problem for metric graphs.  Level sets of harmonic functions are considered;
these help provide needed refinements of the existence results.  Section 4 then addresses the construction of symmetric and
nonnegative self-adjoint Laplace operators based on novel boundary conditions, defined with the aid of harmonic functions.  
The quadratic forms for these operators include boundary terms
which distinguish them from the Dirichlet and Neumann cases.
 
\section{Graphs with totally disconnected boundary}

\subsection{Metric graphs}

Suppose $w_1, w_2 \in \vertexset$.  A vertex path from $w_1$ to $w_2$ is a finite vertex sequence $v_1,\dots ,v_N$
with $v_1 = w_1$, $v_N = w_2$, and $v_n$ adjacent to $v_{n+1}$ for $n  = 1,\dots ,N-1$.  If the edge $e_n$ joining $v_n$ to $v_{n+1}$ 
has length $l_n$ and $L =  \sum_{n=1}^{N-1} l_n$, then a path $\gamma $ from $w_1$ to $w_2$ (with length $L$) is the function $\gamma : [0,L] \to \graph $ obtained by
traversing the edges $e_n$ from $v_n$ to $v_{n+1}$ and $n=1,\dots ,N-1$.  

If $e = \{ v_1,v_2 \} \in \edgeset$ is identified with the interval $[a,b]$ and $x \in e$ is not a vertex, it may be useful to
treat $x$ as an added vertex adjacent to $v_1,v_2$.  Then identify $\{v_1,x\}$ with $[a,x]$ and $\{ x,v_2 \}$ with $[x,b]$.
A path joining two such points $x_1$ and $x_2$ may be defined as above. 
The distance $d(x_1,x_2)$ between points $x_1$ and $x_2$ in $\graph $ is defined as the infimum of the lengths of paths joining $x_1$ and $x_2$.  
This metric extends continuously to $\gbar $.

Points $x_1,x_2 \in \gbar$ can be joined by possibly infinite paths $\gamma :[0,L) \to \graph $ or $\gamma :(-L,L) \to \graph $ 
of finite length, defined analogously for sequences $v_1,v_2, v_3, \dots $ or bidirectional sequences $\dots ,v_{-2},v_{-1},v_0,v_1, \dots$.
In this case it is assumed that $x_2 = \lim_{t \to L} \gamma (t)$ and $x_1 = \gamma (0)$ or $x_1 = \lim_{t \to -L} \gamma (t)$ as appropriate. 

Metric graphs may be equipped with a variety of function spaces.
A function $f:\graph \to \real $ has components $f_e:[a_e,b_e] \to \real$.
In this work functions are real-valued unless otherwise noted.
Our basic Hilbert space is the usual Lebesgue space
\[L^2(\graph ) = \oplus _{e \in \edgeset} L^2[a_e,b_e], \]
with inner product 
\[\langle f,g \rangle _2 =  \int_{\graph} fg = \sum_{e \in \edgeset} \int_{a_e}^{b_e} f_eg_e .\] 

The notation $e \sim v$ indicates that the edge $e$ is incident on a vertex $v$.
If $e \sim v$, the notation $\partial _{\nu} f_e(v)$ is used to indicate the derivative of $f_e$ at $v$ computed in
outward pointing local coordinates.  That is, for this computation, the identification of $e$ with $[a_e,b_e]$,
identifies $v$ with $a_e$.     
  
As an initial domain for $\Laplace $, let $\domain _{max}$ denote the continuous real valued functions $f$ on $\graph $ 
which have absolutely continuous derivatives on each edge $e$, with $f$ and $f'' \in L^2(\graph )$, and which satisfy 
\begin{equation} \label{Kerck} 
\sum _{e \sim v} \partial _{\nu} f_e(v) = 0
\end{equation} 
at interior vertices $v \in \vertexset$.

The interior vertex condition \eqref{Kerck} leads to an important integration by parts lemma.

\begin{lem} \label{IBP}
Suppose $\graph $ is a finite graph with boundary $\partial \graph $.  
If $f, g \in \domain _{max}$, then
\begin{equation} \label{ibp}
\int_{\graph} f''g = - \sum_{v \in \partial \graph} \sum_{e \sim v} g(v) \partial _{\nu} f_e(v) - \int_{\graph} f'g' 
\end{equation}
\[= \sum_{v \in \partial \graph} \sum_{e \sim v} [ f(v) \partial _{\nu} g_e(v) - g(v) \partial _{\nu} f_e(v)] +   \int_{\graph} f g''\]
\end{lem}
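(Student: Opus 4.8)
The plan is to reduce the statement to the classical one-dimensional integration by parts formula applied separately on each edge, and then to reassemble the edge contributions using the continuity of $f,g$ at the vertices together with the interior vertex condition \eqref{Kerck}. First I would fix, for each edge $e$, the identification of $e$ with $[a_e,b_e]$. Since $f \in \domain _{max}$, the component $f_e'$ is absolutely continuous on $[a_e,b_e]$ with $f_e,f_e'' \in L^2[a_e,b_e]$, while $g_e$ is continuous with $g_e' \in L^2[a_e,b_e]$; hence the scalar identity $\int_{a_e}^{b_e} f_e''g_e = f_e'(b_e)g_e(b_e) - f_e'(a_e)g_e(a_e) - \int_{a_e}^{b_e} f_e'g_e'$ holds with integrable integrands. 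Because $\graph$ is finite, there are only finitely many such identities to sum, so no convergence question ever arises.

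The next step is bookkeeping with the outward derivatives. Writing the endpoints of $e$ as the vertices identified with $a_e$ and $b_e$, the convention gives $\partial _\nu f_e(v) = f_e'(a_e)$ at the endpoint identified with $a_e$, whereas at the endpoint identified with $b_e$ the outward local coordinate runs in the opposite direction, so $\partial _\nu f_e(v) = -f_e'(b_e)$ there. Using continuity of $g$ to replace $g_e(a_e)$ and $g_e(b_e)$ by the common vertex values $g(v)$, the boundary contribution of edge $e$ becomes exactly $-\sum_{v \sim e} g(v)\,\partial _\nu f_e(v)$. Summing the per-edge identity over $\edgeset$ and interchanging the (finite) order of summation converts $\sum_{e}\sum_{v \sim e}$ into $\sum_{v \in \vertexset} g(v)\sum_{e \sim v}\partial _\nu f_e(v)$.

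At this point the interior vertex condition \eqref{Kerck} does the essential work: at every interior vertex the inner sum $\sum_{e \sim v}\partial _\nu f_e(v)$ vanishes, so only the boundary vertices survive, giving the first displayed equality $\int_{\graph} f''g = -\sum_{v \in \partial \graph}\sum_{e \sim v} g(v)\,\partial _\nu f_e(v) - \int_{\graph} f'g'$. The second equality then follows by symmetry. Applying the first equality with the roles of $f$ and $g$ exchanged expresses $\int_{\graph} f'g'$ in terms of $\int_{\graph} f g''$ and the boundary terms built from $\partial _\nu g_e$; substituting this back into the first equality and collecting the two boundary sums produces the antisymmetric expression $\sum_{v \in \partial \graph}\sum_{e \sim v}[\,f(v)\partial _\nu g_e(v) - g(v)\partial _\nu f_e(v)\,]$.

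I expect no deep obstacle: the argument is elementary once the finiteness of $\graph$ removes all summation concerns. The only real care lies in the sign convention for $\partial _\nu$ at the two ends of an edge, since a single misplaced sign would corrupt both the boundary flux term and the Green-type antisymmetry. Keeping the identification $v \leftrightarrow a_e$ versus $v \leftrightarrow b_e$ straight, and invoking the continuity of $f$ and $g$ across vertices to pass from edge-endpoint values to vertex values, is therefore the main point to check.
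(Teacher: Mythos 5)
Your proposal is correct and follows essentially the same route as the paper: per-edge integration by parts, regrouping the endpoint terms by vertex using the outward-derivative sign convention and continuity of $g$, killing the interior-vertex sums via \eqref{Kerck}, and then a second application of integration by parts (your role-swap of $f$ and $g$ is exactly this) for the Green-type identity. Your treatment of the sign convention $\partial_\nu f_e(v) = f_e'(a_e)$ versus $-f_e'(b_e)$ is precisely the bookkeeping the paper's proof relies on, so nothing further is needed.
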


\begin{proof}
Using the identification of edges $e$ with intervals $[a_e,b_e]$, integration by parts gives
\[ \int_{\graph} f''g = \sum_{e \in \edgeset} \int_{a_e}^{b_e} f_e''g_e = \sum_{e \in \edgeset}  \Big [ f_e'(b_e)g_e(b_e) - f_e'(a_e)g_e(a_e) \Big ]   - \int_{\graph } f'g' \]
Regroup the sum of boundary terms, collecting those having evaluation at the same vertex.
The terms  $ f_e'(b_e)g_e(b_e)$ have derivatives computed in inward pointing local coordinates, and $g$ is continuous at each $v$, so  
\[ \int_{\graph} f''g = - \sum_{v \in \vertexset} g(v) \sum_{e \sim v}  \partial _{\nu} f_e(v) - \int_{\graph } f'g' \]

\eqref{Kerck} implies that terms in the last sum coming from interior vertices vanish, giving the first line of \eqref{ibp}.
Another integration by parts produces the second line.

\end{proof}

\subsection{Totally disconnected boundary}

Recall that $\gbar$ is compact and $\partial \gbar $ is totally disconnected. 
Since $\gbar$ is compact it must be totally bounded,  leading to the following observation.

\begin{prop}
$\gbar$ is compact if and only if for every $\epsilon > 0$ 
there is a finite subgraph $\graph _{0}$ of $\graph $, 
such that for every $y \in \graph $ there is an $x \in \graph _{0}$ with $d(x,y) < \epsilon $.
\end{prop}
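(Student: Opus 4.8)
The plan is to reduce the statement to the standard characterization that a complete metric space is compact if and only if it is totally bounded. Since $\gbar$ is by construction the metric completion of $\graph$, it is complete; moreover, because $\graph$ is dense in $\gbar$ and the metric extends continuously, the space $\gbar$ is totally bounded precisely when $\graph$ is. Thus the whole proposition comes down to showing that the stated covering condition is equivalent to total boundedness of $\graph$ --- that is, to the existence, for each $\epsilon > 0$, of finitely many points whose $\epsilon$-balls cover $\graph$.

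For the direction assuming $\gbar$ compact, I would fix $\epsilon > 0$ and use total boundedness of $\graph$ to obtain finitely many points $x_1,\dots,x_n \in \graph$ with $\graph \subseteq \bigcup_i B(x_i,\epsilon)$. Each $x_i$ lies on some edge $e_i$, and the subgraph $\graph _{0}$ consisting of these finitely many edges together with their endpoints is a finite subgraph containing all the $x_i$. Since $\graph _{0}$ already contains an $\epsilon$-net, every $y \in \graph$ has some $x \in \graph _{0}$ with $d(x,y) < \epsilon$, which is exactly the asserted condition.

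For the converse I would argue that the covering condition forces $\graph$ to be totally bounded. The key observation is that any finite subgraph $\graph _{0}$ is a finite union of compact edges $[a_e,b_e]$, hence compact, hence totally bounded; so $\graph _{0}$ itself can be covered by finitely many balls $B(z_1,\epsilon),\dots,B(z_m,\epsilon)$ with centers $z_j \in \graph _{0}$. Given $\epsilon > 0$, apply the hypothesis to produce an $\epsilon$-dense finite subgraph $\graph _{0}$, and cover $\graph _{0}$ by such balls. For any $y \in \graph$ there is $x \in \graph _{0}$ with $d(x,y) < \epsilon$ and a center $z_j$ with $d(z_j,x) < \epsilon$, so $d(z_j,y) < 2\epsilon$ by the triangle inequality. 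Hence the finite set $\{z_1,\dots,z_m\}$ is a $2\epsilon$-net for $\graph$; as $\epsilon$ was arbitrary, $\graph$ is totally bounded, and therefore its completion $\gbar$ is compact.

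I do not anticipate a serious obstacle: the content is the routine compactness--total-boundedness equivalence together with completeness of the completion. The only point requiring care is the interplay between a net consisting of a \emph{subgraph} and a net consisting of finitely many \emph{points}; this is resolved in one direction by enclosing the finitely many net points in the edges carrying them, and in the other by exploiting the compactness of a finite subgraph to replace it by a genuine finite point-net, at the cost of doubling $\epsilon$.
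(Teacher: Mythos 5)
Your proof is correct and follows exactly the route the paper intends: the paper offers no separate proof, presenting the proposition as an immediate consequence of the standard fact that the complete space $\gbar$ is compact if and only if it is totally bounded, which is precisely the equivalence you establish. Your only additions are the routine details the paper leaves implicit --- passing between point-nets and subgraph-nets (enclosing net points in their edges one way, using compactness of a finite subgraph and a triangle-inequality doubling of $\epsilon$ the other way) --- and these are carried out correctly.
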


Given $\epsilon > 0$, it will be convenient to have a subgraph $\graph _{\epsilon}$ of $\graph$ containing all points $x \in \graph $ whose 
distance from $\partial \gbar $ is at least $\epsilon $.  The edges $e$ of $\graph _{\epsilon }$ are the (closed) edges of $\graph $ containing a point $x$ with
$d(x,\partial \gbar) \ge \epsilon $.  If an edge $e$ of $\graph _{\epsilon }$ is incident in $\graph $ on a vertex $v$, then $v$ is a vertex of $\graph _{\epsilon}$.

\begin{lem} \label{approx}
If $\gbar $ is compact then $\graph _{\epsilon}$ is a finite graph.
\end{lem}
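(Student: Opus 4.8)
The plan is to show that $\graph_{\epsilon}$ has only finitely many edges; since every vertex of $\graph_{\epsilon}$ is by definition an endpoint in $\graph$ of one of its edges, finitely many edges forces finitely many vertices and hence a finite graph. Writing $K = \{x \in \gbar : d(x,\partial\gbar) \ge \epsilon\}$, the edges of $\graph_{\epsilon}$ are exactly the closed edges $\overline{e}$ with $\overline{e} \cap K \neq \emptyset$, so it suffices to prove that only finitely many closed edges of $\graph$ meet $K$.

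First I would record that $K$ is compact: the function $x \mapsto d(x,\partial\gbar)$ is continuous on $\gbar$, so $K$ is closed, and a closed subset of the compact space $\gbar$ is compact. I would also isolate the essential point that $K$ avoids the boundary: for $x \in K$ we have $d(x,\partial\gbar) \ge \epsilon > 0$, so $x \in \graph_{int}$. Thus $K$ consists entirely of genuine interior points of $\graph$ (each either an edge-interior point or an interior vertex), not of the newly added completion points.

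The heart of the argument is a local finiteness observation at each point of $K$. For $x \in K$ I claim there is a radius $r_x$ with $0 < r_x < \epsilon$ such that the open ball $B(x,r_x)$ meets only finitely many closed edges. If $x$ lies in the interior of an edge $e$, then choosing $r_x$ smaller than the distance from $x$ to each endpoint of $e$ keeps $B(x,r_x)$ inside $e$, so the ball meets a single edge. If $x$ is an interior vertex $v$, then local finiteness of $\graph$ gives $v$ finite degree, and taking $r_x$ smaller than the minimum of the finitely many positive incident edge lengths forces $B(v,r_x)$ into the star of $v$, where it meets only the incident edges. In either case the choice $r_x < \epsilon$ also guarantees $B(x,r_x) \cap \partial\gbar = \emptyset$, which is exactly what lets me invoke the interior local structure of $\gbar$ rather than worrying about short edges accumulating near the boundary.

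To finish, the balls $\{B(x,r_x)\}_{x \in K}$ form an open cover of the compact set $K$, so finitely many of them, say $B(x_1,r_{x_1}),\dots,B(x_n,r_{x_n})$, already cover $K$. Any edge $e$ of $\graph_{\epsilon}$ contains a point of $K$, which lies in some $B(x_i,r_{x_i})$; hence $\overline{e}$ is among the finitely many closed edges met by that ball. Therefore $\graph_{\epsilon}$ has finitely many edges, and the reduction of the first paragraph completes the proof. I expect the only delicate step to be the local claim of the third paragraph: it is precisely there that both hypotheses enter — local finiteness of $\graph$ to control the vertices, and the gap $d(\cdot,\partial\gbar) \ge \epsilon$ to keep the relevant points in the well-behaved interior rather than at the boundary, where infinitely many edges could otherwise cluster. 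A sequential variant, extracting a convergent subsequence from one point chosen per edge and contradicting the distinctness of those edges near the limit, would work equally well but requires the same local analysis.
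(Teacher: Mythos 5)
Your proof is correct and rests on exactly the same two ingredients as the paper's argument --- compactness of $\gbar$ and local finiteness of $\graph$ at interior points --- differing only in that you globalize via a finite subcover of the compact set $K=\{x \in \gbar : d(x,\partial\gbar)\ge\epsilon\}$, while the paper argues by contradiction: it picks one point in each of infinitely many distinct edges, extracts a convergent subsequence, and contradicts local finiteness at the interior limit point. The sequential variant you sketch in your closing sentence is precisely the paper's proof, so this is essentially the same approach in dual (covering rather than sequential) form.
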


\begin{proof}
Arguing by contradiction, suppose $\graph $ has
a sequence $\{ e_n, n = 1,2,3,\dots  \} $ of distinct edges, each containing a point $x_n$ with $d(x_n,\partial \gbar ) \ge \epsilon $.
Since $\gbar $ is compact, the sequence $\{ x_n \}$ has a convergent subsequence $\{ z_k , k = 1,2,3,\dots \}$ with limit $z \in \gbar $.
Since $d(z, \partial \gbar ) \ge \epsilon $ it must be that $z \in \graph $.  

Suppose $z$ is in the edge $e$.  Since $\graph $ is locally finite, there are only finitely many edges
sharing a vertex with $e$.  With only finitely many exceptions, the points $z_k$ are outside this set of edges, so
$z$ cannot be the limit of the subsequence.  Thus $\graph _{\epsilon }$ cannot have infinitely many distinct edges.

\end{proof}

The boundary $\partial \gbar$ is a closed subset of $\gbar$, so it a totally disconnected compact metric space.
Some general facts about a such metric spaces will be useful.  In particular,
as a totally disconnected compact metric space, $\partial \gbar $ will have a rich collection of clopen subsets, 
which are both open and closed in $\partial \gbar $.  

A version of the next result about a totally disconnected compact metric space $\Omega$ is in \cite[p. 97]{Hocking}.
Suppose ${\cal E }_1$ and ${\cal E }_2$ are partitions of $\Omega$.
Partition ${\cal E }_2$ is a refinement of $ {\cal E }_1$ if each set in ${\cal E }_2$ is a subset of a set in $ {\cal E }_1$.

\begin{prop} \label{manycor}
Suppose $\Omega $ is a totally disconnected compact metric space.  
For any $\epsilon > 0$, there is a finite partition ${\cal E} = \{ E(n), n=1, \dots ,N \}$ of $\Omega $ by clopen sets such that 
the diameter of each $E(n) \in {\cal E}$ is less than $\epsilon $.

There is a sequence $\{ {\cal E }_j, j = 1,2,3,\dots \} $ of partitions of $\Omega $ by clopen sets, with ${\cal E }_{j+1}$ a refinement of ${\cal E }_j$,
such that the diameter of each set in $ {\cal E }_j $ is less than $1/j$. 
\end{prop}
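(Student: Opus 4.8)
The plan is to reduce everything to a single topological fact: in a compact totally disconnected metric space the clopen sets form a basis for the topology, i.e.\ for each $x$ and each open neighborhood $U$ of $x$ there is a clopen set $V$ with $x \in V \subseteq U$. This is the substance of the result quoted from \cite{Hocking}. In a compact Hausdorff space the connected component of a point coincides with its quasi-component, the intersection of all clopen sets containing it; total disconnectedness forces the components, hence the quasi-components, to be singletons, and a standard compactness argument then produces a clopen set separating $x$ from the closed complement of $U$. Once this is available, both assertions follow by elementary manipulations.

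For the first assertion, given $\epsilon > 0$, I would cover $\Omega$ by clopen sets of small diameter. For each $x \in \Omega$ choose a clopen $V_x$ with $x \in V_x$ contained in the open ball of radius $\epsilon /2$ about $x$, so that $\mathrm{diam}(V_x) < \epsilon$. The sets $V_x$ form an open cover, and compactness yields a finite subcover $V_{x_1}, \dots , V_{x_M}$. To turn this cover into a partition I would disjointify: set $E(1) = V_{x_1}$ and $E(k) = V_{x_k} \setminus \bigcup_{i=1}^{k-1} V_{x_i}$ for $k > 1$. Each $E(k)$ is a finite Boolean combination of clopen sets, hence clopen; the sets $E(k)$ are pairwise disjoint with union $\Omega$; and $E(k) \subseteq V_{x_k}$ forces $\mathrm{diam}(E(k)) < \epsilon$. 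Discarding any empty sets leaves the required finite clopen partition ${\cal E}$.

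For the second assertion I would construct the nested sequence by induction, using two observations: a clopen subset of a clopen subspace of $\Omega$ is again clopen in $\Omega$ (open in open is open, closed in closed is closed), and each member of a clopen partition is itself a compact totally disconnected metric space. Take ${\cal E}_1$ to be any partition furnished by the first assertion with $\epsilon = 1$. Given ${\cal E}_j$, apply the first assertion inside each $E \in {\cal E}_j$ with $\epsilon = 1/(j+1)$ to split $E$ into finitely many clopen pieces of diameter less than $1/(j+1)$; these pieces are clopen in $\Omega$ by the first observation, and collecting them over all $E \in {\cal E}_j$ produces ${\cal E}_{j+1}$. By construction ${\cal E}_{j+1}$ refines ${\cal E}_j$, and every set in ${\cal E}_j$ has diameter less than $1/j$.

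The only genuinely delicate point is the clopen basis fact invoked at the outset; the rest is set-theoretic bookkeeping together with one appeal to compactness. I therefore expect the main obstacle to be a clean justification that quasi-components reduce to singletons and that this yields the separating clopen set, which is precisely the content borrowed from \cite{Hocking}.
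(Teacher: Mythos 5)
Your proof is correct, but be aware that the paper itself offers no argument for this proposition: it states the result after remarking that ``a version of the next result \ldots is in \cite[p. 97]{Hocking}'' and moves directly on. So there is no internal proof to match; what you have done is supply the details the paper delegates to that reference. Your reduction to the clopen-basis fact is the standard route: in a compact Hausdorff space components coincide with quasi-components, so total disconnectedness makes quasi-components singletons, and a compactness argument over the closed set $\Omega \setminus U$ then yields a clopen $V$ with $x \in V \subseteq U$. The disjointification $E(k) = V_{x_k} \setminus \bigcup_{i<k} V_{x_i}$ correctly preserves clopenness (finite Boolean combinations of clopen sets are clopen), and your induction for the nested sequence is sound because each member of a clopen partition is itself a compact, totally disconnected metric space, and a clopen subset of a clopen subspace is clopen in $\Omega$. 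One small adjustment: a set contained in the ball $B(x,\epsilon/2)$ has diameter at most $\epsilon$, not necessarily strictly less than $\epsilon$, since a supremum of distances each $< \epsilon$ can equal $\epsilon$; choosing $V_x$ inside $B(x,\epsilon/3)$ gives $\mathrm{diam}(V_x) \le 2\epsilon/3 < \epsilon$ and removes the issue. With that trivial fix your argument is complete, and it has the virtue of being self-contained (modulo the quasi-component theorem) where the paper relies on a citation.
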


The next result characterizes the compact completions $\gbar $ with totally disconnected boundary.
In an earlier work \cite{Carlson08} the author used an assumption that $\gbar $ was weakly connected.
$\gbar $ is weakly connected if for every pair of distinct points $x,y \in \gbar $,
there is a finite set of points $W = \{ w_1,\dots ,w_K \} $ in the graph $\graph $ 
separating $x$ from $y$.  That is, there are disjoint open subsets $U_x,U_y$ of $\gbar$ with $x \in U_x$ and $y \in U_y$ such that
$\gbar \setminus W = U_x \cup U_y$.

\begin{thm} \label{tdeqwc}
If $ \gbar $ is compact,  then $\partial \gbar $ is totally disconnected if and only if 
$\gbar $ is weakly connected.
\end{thm}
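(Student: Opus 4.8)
The plan is to establish the two implications separately. For the forward direction I will show that total disconnectedness of $\partial\gbar$ lets me separate any two distinct points of $\gbar$ by a finite subset of $\graph$, realizing that set as a level set of a distance function; the finiteness of this set is where \lemref{approx} enters. For the converse I will argue by contraposition, the crucial observation being that every boundary vertex is \emph{isolated} in $\partial\gbar$, so that any nondegenerate connected component of $\partial\gbar$ must avoid $\graph$ entirely — which is exactly what forces a separating set $W\subseteq\graph$ into a contradiction.

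For the forward direction, fix distinct $x,y\in\gbar$. If one of them, say $x$, lies in $\graph _{int}$, then local finiteness lets me choose $\delta\in(0,d(x,y))$ so small that $B(x,\delta)$ contains no vertex except possibly $x$; the sphere $W=\{z:d(x,z)=\delta\}$ is then a finite subset of $\graph _{int}$, and $U_x=\{d(x,\cdot)<\delta\}$, $U_y=\{d(x,\cdot)>\delta\}$ separate $x$ from $y$. If instead both $x,y\in\partial\gbar$, I use total disconnectedness (\propref{manycor}) to obtain a clopen $E\subseteq\partial\gbar$ with $x\in E$ and $y\in F:=\partial\gbar\setminus E$. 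As $E,F$ are disjoint compact sets, $\eta=d(E,F)>0$; set $t=\eta/2$, $W=\{z\in\gbar:d(z,E)=t\}$, $U_x=\{d(\cdot,E)<t\}$, and $U_y=\{d(\cdot,E)>t\}$. The substantive point is that $W$ is a finite subset of $\graph$: for $z\in W$ one has $d(z,E)=t$ and $d(z,F)\ge\eta-t$, so $d(z,\partial\gbar)=\min(d(z,E),d(z,F))\ge t$, whence $z\notin\partial\gbar$ and $z$ lies in the finite subgraph $\graph _t$ of \lemref{approx}. Restricted to a single edge, $d(\cdot,E)$ equals the minimum of two affine functions with slopes $+1$ and $-1$ (every geodesic from an interior edge point to the disjoint set $E$ must leave through an endpoint), so the value $t$ is attained at most twice per edge; since $\graph _t$ has only finitely many edges, $W$ is finite. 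This produces the required separation.

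For the converse, suppose $\gbar$ is weakly connected. I first record that each boundary vertex $v$ is isolated in $\partial\gbar$: by local finiteness $v$ has finitely many incident edges of positive length, so a ball about $v$ of radius below half the minimal incident length meets $\gbar$ only in $v$ together with interior edge points, and since those finitely many closed segments are compact no completion points intrude; hence $\{v\}$ is clopen in $\partial\gbar$. Consequently every connected component of $\partial\gbar$ that contains a boundary vertex equals that singleton, so each nondegenerate component is contained in $\partial\gbar\setminus\graph$. Now assume for contradiction that $\partial\gbar$ is not totally disconnected, so some component $C$ contains distinct $x,y$; then $C\subseteq\gbar\setminus\graph$. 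Weak connectedness supplies a finite $W\subseteq\graph$ and disjoint open sets $U_x\ni x$, $U_y\ni y$ with $\gbar\setminus W=U_x\cup U_y$. Since $C\cap\graph=\emptyset$ we have $C\cap W=\emptyset$, so $C=(C\cap U_x)\cup(C\cap U_y)$ splits the connected set $C$ into two disjoint nonempty relatively open pieces — a contradiction.

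I expect the main obstacle to be the converse's isolation lemma together with the bookkeeping that pushes a nondegenerate component off of $\graph$; once that is secured, the separating set $W\subseteq\graph$ can no longer meet the component, and the contradiction is immediate. In the forward direction the only delicate point is the finiteness of the level set $W$, which rests on combining \lemref{approx} with the piecewise-affine, slope-$\pm1$ structure of the edge-restricted distance functions.
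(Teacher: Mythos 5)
Your proof is correct, and both implications deserve comment. In the direction ``totally disconnected implies weakly connected'' you follow the paper's skeleton --- clopen separation of the boundary via \propref{manycor}, positive distance between the two compact pieces, and finiteness supplied by \lemref{approx} --- but you realize the separating set differently: the paper takes $W$ to be the vertex set of the finite subgraph $\graph _{\epsilon}$ and builds the open sets as $N_{\epsilon}(E_x)\setminus \graph _{\epsilon}$ together with the complementary piece (whose openness must be checked by hand as a union of $U_y$ with open edges), whereas you take $W$ to be the level set $\{ d(\cdot ,E) = t \}$, so that $U_x$ and $U_y$ are automatically open sub- and superlevel sets. The price of your version is the extra geometric observation that $d(\cdot ,E)$ restricted to an edge is the minimum of two affine functions of slopes $\pm 1$ (since any path from an interior edge point to $E$ exits through an endpoint), hence meets the value $t$ at most twice per edge of the finite graph $\graph _t$; that observation is correct and is exactly what makes $W$ finite. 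In the converse direction your route is genuinely different from the paper's, and in fact more careful. The paper asserts that $U_x \cap \partial \gbar$ and $U_y \cap \partial \gbar$ form a clopen partition of $\partial \gbar$, which tacitly assumes $W \cap \partial \gbar = \emptyset$; but the definition of weak connectedness only places $W$ in $\graph$, and $\graph$ contains the boundary vertices of $\graph$, which lie in $\partial \gbar$. Your isolation lemma --- each boundary vertex is a clopen singleton of $\partial \gbar$, by local finiteness and compactness of the incident closed edge segments --- forces any nondegenerate connected component of $\partial \gbar$ to avoid $\graph$ entirely, hence to avoid $W$, after which splitting the component by $U_x$ and $U_y$ gives the contradiction. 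So your argument closes a case the paper's proof glosses over, at the cost of one extra lemma; the paper's version is the shorter one precisely when $W$ happens to contain no boundary vertices.
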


\begin{proof}
Assume $\gbar $ is weakly connected. If $x$ and $y$ are distinct points in $\partial \gbar $,
they are in distinct clopen subsets of $\partial \gbar$ given by $U_x \cap \partial \gbar $ and $U_y \cap \partial \gbar $, with 
$[U_x \cap \partial \gbar ] \cup  [U_y \cap \partial \gbar ] = \partial \gbar$. Thus $x$ and $y$ lie in distinct connected components,
and $\partial \gbar$ is totally disconnected.

Assume now that $\gbar $ is compact with a totally disconnected boundary.  
Consider distinct points $x,y$ in $\gbar $. 
If either $x$ or $y$ is a point of $\graph $, they are easily separated by the removal of a finite set of points in $\graph $.
Assume then that $x$ and $y$ belong to $\partial \gbar $, but are not boundary vertices of $\graph $.
Then by \propref{manycor} there are disjoint clopen sets $E_x,E_y \subset \partial \gbar$ 
with $x \in E_x$, $y \in E_y$, and $E_x \cup E_y = \partial \gbar$.

For $z  \in \gbar $, let $B_{\epsilon}(z)$ denote the open ball of radius $\epsilon > 0$ centered at $z$, 
while the $\epsilon $ neighborhood of a set $E \subset \gbar$ is $N_{\epsilon}(E) = \cup_{z \in E} B_{\epsilon}(z) $.
Since $E_x$ and $E_y$ are compact and disjoint in $\gbar $, the neighborhoods $N_{\epsilon}(E_x)$ and $N_{\epsilon}(E_y)$
are disjoint if $\epsilon > 0$ is sufficiently small \cite[p. 86]{KolmF}.

The subgraph $\graph _{\epsilon} $, which is finite by \lemref{approx}, is now useful.
The set $\gbar \setminus [ N_{\epsilon}(E_x) \cup N_{\epsilon}(E_y)] $ is a subset of $\graph _{\epsilon}$.
Define 
\[U_x =  N_{\epsilon}(E_x) \setminus \graph _{\epsilon}, \quad U_y =  N_{\epsilon}(E_y) \setminus \graph _{\epsilon}.\]
Note that $U_x \cup U_y \cup \graph _{\epsilon} = \gbar $. 
The sets $U_x$ and $U_y$ are still open neighborhoods of $x,y$ respectively. 

Let $W$ be the set of vertices in $\graph _{\epsilon}$, and let $V$ be the complement of $U_x$ in $\gbar \setminus W$.
$V$ is open since it is the union of $U_y$ and the collection of open edges of $\graph _{\epsilon}$. 
The sets $U_x,V$ provide the desired separation of $x$ and $y$ by a finite set $W$ of points from $\graph $, showing that $\gbar $ is weakly connected.
\end{proof} 

An important role in the function theory of $\gbar$ is played by an algebra $\alg$ of 'eventually flat' functions.
$\alg $ is the set of functions $\phi: \gbar \to \real $ which are continuous on $\gbar$ and infinitely differentiable on the open edges of $\graph $,  
with $\phi ' = 0$ in the complement of a finite collection of edges, and 
in an open neighborhood of each vertex $v \in \graph$.  
With pointwise multiplication, $\alg$ is a subalgebra of the continuous functions
on $\gbar$ which contains the constant functions. 
A similar class of functions and its relation to the end compactification of a graph was
considered in \cite{Cartwright}.  

\begin{lem} \label{Afunk}
Suppose $\gbar$ is compact with a totally disconnected boundary.
Assume that $E$ and $E^c = \partial \gbar \setminus E$ are nonempty clopen subsets of $\partial \gbar$.
Then there is a function $\phi \in \alg$ with $\phi (x) = 1$ for $x \in E$ and $\phi (x) = 0$ for $x \in E^c$. 
\end{lem}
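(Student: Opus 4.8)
The plan is to build $\phi$ by hand: declare it to equal $1$ on the part of $\gbar$ that reaches $E$, to equal $0$ on the part that reaches $E^c$, and to interpolate smoothly across finitely many edges that remain bounded away from $\partial \gbar$. The starting point is to mimic the opening of the proof of \thmref{tdeqwc}. Since $E$ and $E^c$ are disjoint closed subsets of the compact space $\partial \gbar$, they are disjoint compact subsets of $\gbar$, so for all sufficiently small $\epsilon > 0$ the neighborhoods $N_\epsilon(E)$ and $N_\epsilon(E^c)$ are disjoint. By \lemref{approx} the subgraph $\graph_\epsilon$ is finite, and any point of $\gbar$ not lying on an edge of $\graph_\epsilon$ is within distance $\epsilon$ of $\partial \gbar = E \cup E^c$; hence $\gbar = \graph_\epsilon \sqcup U_E \sqcup U_{E^c}$, where $U_E = N_\epsilon(E) \setminus \graph_\epsilon$ and $U_{E^c} = N_\epsilon(E^c) \setminus \graph_\epsilon$ are open and disjoint. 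Because each open edge $e \notin \graph_\epsilon$ is connected, it lies entirely in $U_E$ or entirely in $U_{E^c}$, and every vertex not in $\graph_\epsilon$ has all of its incident edges on a single side.

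Next I would let $T$ be the collection of closed edges $e \notin \graph_\epsilon$ that meet a vertex of $\graph_\epsilon$ and lie in $U_{E^c}$; this collection is finite because $\graph_\epsilon$ has finitely many vertices and $\graph$ is locally finite. I would then set $\phi = 1$ on $\graph_\epsilon$, on $U_E$, and on $E$; set $\phi = 0$ on $E^c$ and on every point of $U_{E^c}$ not lying on an edge of $T$; and on each edge of $T$ take $\phi$ to be a smooth function that is flat near each endpoint and matches the values already assigned there (so $1$ at the endpoint on $\graph_\epsilon$ and $0$ at the far endpoint, and constant $1$ in the degenerate case that both endpoints lie in $\graph_\epsilon$). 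Since the value assigned at each vertex is unambiguous, $\phi$ is well defined on all of $\gbar$.

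Finally I would verify that $\phi \in \alg$ and has the required boundary values. Smoothness on open edges is built in, $\phi' = 0$ off the finite set $T$, and $\phi$ is identically $1$ or identically $0$ in a neighborhood of every vertex (all transitions occur in edge interiors), so $\phi' = 0$ near each vertex. The one genuinely delicate point, which I expect to be the main obstacle, is continuity at the points of $\partial \gbar$. Here I would use that $\graph_\epsilon \cup T$ is a finite union of compact edges, hence compact and contained in $\graph$, so it is disjoint from $\partial \gbar$ and therefore at positive distance from any point $p \in \partial \gbar$. For $p \in E$, every point of $\gbar$ close enough to $p$ avoids $\graph_\epsilon \cup T$ and lies in $N_\epsilon(E)$, hence in $U_E$, where $\phi = 1 = \phi(p)$; the symmetric argument gives $\phi \to 0$ at points of $E^c$. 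This establishes continuity and shows $\phi$ takes the stated values on $E$ and $E^c$, completing the construction.
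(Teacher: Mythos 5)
Your construction has a genuine gap: you have overlooked that $\partial \gbar$ may contain vertices of $\graph$ itself. By the paper's conventions $\partial \graph \subset \partial \gbar$, and $\partial\graph$ contains every degree-one vertex; such a boundary vertex $v \in E^c$ can perfectly well be a vertex of $\graph_{\epsilon}$, since it only needs to be an endpoint of an edge that reaches distance $\ge \epsilon$ from $\partial\gbar$. At such a point your assignment is contradictory: you set $\phi = 1$ on all of $\graph_{\epsilon}$, hence $\phi(v)=1$, while your assignment on $E^c$ (and the lemma itself) forces $\phi(v)=0$; moreover, since the incident $\graph_{\epsilon}$-edge carries the value $1$ all the way to $v$, no choice of value at $v$ alone restores continuity --- the transition from $1$ to $0$ must occur \emph{inside} that edge of $\graph_{\epsilon}$, which your scheme never permits (all of your transitions live on the edges $T$ outside $\graph_{\epsilon}$). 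The simplest counterexample is $\graph = [0,1]$, a single edge with two degree-one vertices: then $\gbar = [0,1]$, $\partial\gbar = \{0,1\}$, $E = \{0\}$, $E^c = \{1\}$, and for small $\epsilon$ one has $\graph_{\epsilon} = [0,1]$ and $U_E = U_{E^c} = T = \emptyset$, so your recipe outputs $\phi \equiv 1$ while simultaneously demanding $\phi(1)=0$. The same oversight invalidates the step you flagged as delicate: the claim that $\graph_{\epsilon}\cup T$ ``is contained in $\graph$, so it is disjoint from $\partial\gbar$'' is false in general, because $\graph \cap \partial\gbar = \partial\graph$ need not be empty.

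This is precisely the configuration the paper's proof is built to handle. There one chooses $\epsilon$ with $d(x,y) > 3\epsilon$ for $x \in E$, $y \in E^c$, and \emph{first} defines $\phi$ by distance, $\phi = 1$ on $\{x : d(x,E)\le\epsilon\}$ and $\phi = 0$ on $\{x : d(x,E^c)\le\epsilon\}$; these sets reach into $\graph_{\epsilon}$, so a boundary vertex of $E^c$ lying in $\graph_{\epsilon}$ automatically receives the value $0$ together with a neighborhood of it. The smooth interpolation is then performed in the interiors of the finitely many edges of $\graph_{\epsilon}$ on which both values already appear near opposite endpoints, with $\phi' = 0$ near the endpoints. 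Your argument is correct in the special case $\partial\graph = \emptyset$ (when $\partial\gbar$ consists only of completion points, $\graph_{\epsilon}\cup T$ really is compact and disjoint from $\partial\gbar$, and your continuity argument goes through), and it could be repaired by also allowing transitions on edges of $\graph_{\epsilon}$ incident on boundary vertices in $E^c$, but as written it does not prove the lemma.
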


\begin{proof}
Since $E$ and $E^c$ are disjoint and compact, we may choose $\epsilon > 0$ such that $d(x,y) > 3 \epsilon $ for all
$x \in E$ and $y \in E^c$.  Begin by taking $\phi (x) = 1$ if $d(x,E) \le \epsilon $ and $\phi (x) = 0$ if $d(x,E^c) \le \epsilon $.
Now $\phi $ must be extended to $\graph _{\epsilon }$.

Let $e$ be a closed edge of $\graph _{\epsilon }$ which contains a point $x$ where $\phi $ is not yet defined.
First, if $e$ has no point $x$ with $\phi (x) = 1$ define $\phi (x) = 0$ for $x \in e$.  For the remaining edges,
$\phi ^{-1}(1) \cap e$ and $\phi ^{-1}(0) \cap e$ will be separated subintervals of $e$, each containing an endpoint of $e$.  Extend $\phi $ smoothly to these edges $e$,
with $\phi ' = 0$ in a neighborhood of the endpoints of $e$.  Since $\graph _{\epsilon }$ is finite by \lemref{approx}, the extended function $\phi $
is in $\alg $.
 
\end{proof}

\propref{manycor} easily shows that $\alg$ separates points of $\gbar$, so the Stone-Weierstrass theorem implies the next result.

\begin{thm} \label{A1}
Assume $\gbar$ is compact with totally disconnected boundary $\partial \gbar$.
Then $\alg $ is uniformly dense in the space of continuous functions on $\gbar $, and the boundary values of 
$\alg $ are uniformly dense in the space of continuous functions on $\partial \gbar $.
\end{thm}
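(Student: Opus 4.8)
The plan is to verify the hypotheses of the Stone--Weierstrass theorem for the algebra $\alg$ acting on the compact metric space $\gbar$, and then to deduce the boundary statement. Since $\gbar$ is compact metric it is in particular compact Hausdorff, and the text already records that $\alg$ is a subalgebra of $C(\gbar)$ containing the constant functions. The only remaining hypothesis to check is that $\alg$ separates the points of $\gbar$. Once point separation is established, Stone--Weierstrass gives at once that $\alg$ is uniformly dense in $C(\gbar)$, which is the first assertion.

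To establish point separation I would treat distinct $x,y \in \gbar$ in cases. If at least one of $x,y$ lies in $\graph$, say $x \in \graph$, then since $x$ and $y$ are at positive distance and $\graph$ is locally finite, an eventually flat function can be built that equals $1$ on a small neighborhood of $x$ and $0$ near $y$, with derivative vanishing near each vertex and outside finitely many edges; such a function lies in $\alg$ and separates the pair. In the remaining case neither point lies in $\graph$, so both belong to $\partial \gbar$. Here I would apply \propref{manycor} with $\epsilon < d(x,y)$ to obtain a finite clopen partition of $\partial \gbar$ into pieces of diameter less than $\epsilon$, so that $x$ and $y$ lie in different pieces. Letting $E$ be the piece containing $x$ and $E^c$ the union of the remaining pieces gives complementary nonempty clopen subsets of $\partial \gbar$, and \lemref{Afunk} then produces $\phi \in \alg$ with $\phi = 1$ on $E$ and $\phi = 0$ on $E^c$; thus $\phi(x) = 1 \ne 0 = \phi(y)$. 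This shows $\alg$ separates points, completing the proof of the first assertion.

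For the second assertion I would pass to the restriction algebra $\alg|_{\partial \gbar} = \{ \phi|_{\partial \gbar} : \phi \in \alg \}$. Restriction is an algebra homomorphism, so this is a subalgebra of $C(\partial \gbar)$ containing the constants, and the boundary case above shows it separates the points of the compact metric space $\partial \gbar$. A second application of Stone--Weierstrass then gives uniform density of $\alg|_{\partial \gbar}$ in $C(\partial \gbar)$. Alternatively, one may extend a given $g \in C(\partial \gbar)$ to $G \in C(\gbar)$ by the Tietze theorem, approximate $G$ uniformly on $\gbar$ by the first assertion, and restrict; since the restriction map does not increase the sup norm, this again yields the claim.

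The main obstacle is the point-separation step, and within it the case of two boundary points, which is exactly where total disconnectedness of $\partial \gbar$ is indispensable: it is \propref{manycor} and \lemref{Afunk}, both resting on the abundance of clopen subsets of $\partial \gbar$, that convert a mere topological separation of two boundary points into an honest separating function lying in $\alg$ and respecting the flatness constraints near vertices. The interior cases are routine bump-function constructions, but some care is needed there to keep the derivative zero near every vertex and outside a finite edge set, so that the separating function genuinely belongs to $\alg$.
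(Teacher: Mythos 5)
Your proposal is correct and follows essentially the same route as the paper, whose entire proof is the remark that \propref{manycor} (together with \lemref{Afunk}) shows $\alg$ separates points of $\gbar$, after which the Stone--Weierstrass theorem applies; your write-up simply fills in the point-separation cases and the restriction-to-the-boundary step that the paper leaves implicit.
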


\section{Finite energy harmonic functions}

\subsection{Energy spaces}

The introduction of an additional Hilbert space $\Hilbert _1$ will assist in understanding the harmonic functions on $\gbar$.
Let $\mu $ be a finite positive measure on $\partial \gbar $. The $\Hilbert _1$ inner product is 
\begin{equation} \label{Eprod}
\langle f,g \rangle _{1} =  \int_{\graph}  f'g' + \int_{\partial \gbar} fg \ d \mu .
\end{equation}
The elements of $\Hilbert _1$ are the functions $f: \gbar \to \real $ which are continuous on $\gbar$ and absolutely continuous on the edges of $\graph $,
with $f' \in L^2(\graph )$.  Addition and scalar multiplication are defined pointwise as usual.
Similar spaces appear in the study of resistor networks \cite{Doyle} and the associated operator theory \cite{JP}.
The measure $\mu $ plays a modest role in this work, but can have significance in physical modeling, as the next example illustrates.

Examples incorporating Robin boundary conditions and energy space inner products arise from models of strings coupled to springs.
Suppose the string displacement from equilibrium is $u(t,x)$, with $a \le x \le b$.
The string is attached to a spring at $a$ with spring constant $k_a > 0$, and at $b$ with constant $k_b > 0$.
The springs are  constrained to move transversely.   A standard model  \cite[p. 30]{Graff} for the system motion
uses the wave equation $ \partial ^2 u/\partial t^2 - \partial ^2 u/\partial x^2 = 0$ with the boundary conditions
\begin{equation} \label{Robin0}
 \frac{\partial u(t,x)}{\partial x}|_{x = a}  = k_a u(t,a), \quad  \frac{\partial u(t,x)}{\partial x}|_{x = b}  = -k_b u(t,b).
\end{equation}
The associated Sturm-Liouville operator $-D^2 = - \partial ^2 /\partial x^2$ with the boundary conditions $f'(a) = k_af(a)$ and
$f'(b) = -k_bf(b)$ has the quadratic (potential energy) form
\begin{equation} \label{Xform}
\int_a^b (-D^2f)f \ dx = -f'f \big |_a^b + \int_a^b (f')^2  \ dx 
\end{equation}
\[= k_bf(b)^2 + k_af(a)^2  + \int_a^b (f')^2  \ dx,\]
which is strictly positive for $f \not= 0$.
This is just the energy space form if the graph has a single edge and the measure $\mu$ assigns mass $k_a$ to $a$ and $k_b$ to $b$.

These models have a direct generalization  to finite graphs $\graph _0$, with continuity and $\eqref{Kerck}$ 
holding at interior vertices.  At boundary vertices the Robin condition becomes
\begin{equation} \label{Robin1} 
\sum _{e \sim v} \partial _{\nu} f_e(v) = k_v f(v),
\end{equation} 
with integration giving
\[\int_{\graph _0}  (-D^2f)f = \sum_{v \in \partial \graph _0} k_v f(v)^2 + \int _{\graph _0} (f')^2 .\]
Without the spring contributions, string vibration problems on finite graphs have been studied before \cite{Avdonin, DagerZ,Lagnese}. 

Returning to the general case, the next result establishes that $\Hilbert _1$ is complete.

\begin{prop} \label{isHilbert}
Assume that $\mu $ is a positive measure on $\partial \gbar$ with $0 < \mu ( \partial \gbar ) < \infty$.
A Cauchy sequence in $\Hilbert _1$ converges uniformly to a continuous function on $\gbar $.
$\Hilbert _1$ is a Hilbert space.
\end{prop}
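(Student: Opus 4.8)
The plan is to reduce everything to one a priori estimate: that the $\Hilbert_1$ norm dominates the uniform norm, $\|f\|_{\infty} \le C\|f\|_1$ for a constant $C$ depending only on $m := \mu(\partial \gbar)$ and the diameter $D$ of $\gbar$. Once this is in hand, completeness follows by standard arguments. The estimate rests on two complementary facts. The first controls oscillation: given $x,y \in \gbar$, join them by a path $\gamma$ of finite length $\ell(\gamma) \le D$ (possible since $\gbar$ is compact, hence bounded, and any two of its points are joined by a path of finite length as noted in \S2). Because $f$ is continuous on $\gbar$ and absolutely continuous along edges, the Cauchy--Schwarz inequality yields
\[
|f(x) - f(y)| \le \int_{\gamma} |f'| \le \Big( \int_{\gamma} (f')^2 \Big)^{1/2} \ell(\gamma)^{1/2} \le D^{1/2}\, \|f\|_1 ,
\]
where for an infinite path of finite length the same bound is obtained by summing over the successive edges and using the continuity of $f$ at the terminal point. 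Thus the oscillation of $f$ on $\gbar$ is at most $D^{1/2}\|f\|_1$.

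The second fact anchors the magnitude of $f$ using the boundary term. From the definition of the norm, $\int_{\partial \gbar} f^2 \, d\mu \le \|f\|_1^2$, and since $m > 0$ the continuous function $f^2$ must attain on the compact set $\partial \gbar$ a value no larger than its $\mu$-average $m^{-1}\int_{\partial \gbar} f^2 \, d\mu$. Hence there is a point $x_0 \in \partial \gbar$ with $|f(x_0)| \le m^{-1/2}\|f\|_1$. Combining this with the oscillation bound gives, for every $x \in \gbar$,
\[
|f(x)| \le |f(x_0)| + |f(x) - f(x_0)| \le \big( m^{-1/2} + D^{1/2} \big)\, \|f\|_1 ,
\]
which is the desired estimate with $C = m^{-1/2} + D^{1/2}$.

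With this bound, let $\{ f_n \}$ be a Cauchy sequence in $\Hilbert_1$. The estimate shows $\{ f_n \}$ is uniformly Cauchy, so it converges uniformly to a continuous function $f$ on $\gbar$. The derivatives $f_n'$ form a Cauchy sequence in $L^2(\graph)$ and converge there to some $g$. To see that $g = f'$, fix an edge $e$ and pass to the limit in $f_n(x) - f_n(a_e) = \int_{a_e}^x f_n'$: uniform convergence handles the left-hand side, while $L^2$, hence $L^1$, convergence on the bounded edge handles the integral, yielding $f(x) - f(a_e) = \int_{a_e}^x g$. Thus $f$ is absolutely continuous on each edge with $f' = g \in L^2(\graph)$, so $f \in \Hilbert_1$. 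Finally $\|f_n - f\|_1^2 = \int_{\graph} (f_n' - f')^2 + \int_{\partial \gbar}(f_n - f)^2 \, d\mu \to 0$, the first term because $f_n' \to g = f'$ in $L^2(\graph)$, the second because $\int_{\partial \gbar}(f_n - f)^2 \, d\mu \le m \, \|f_n - f\|_{\infty}^2 \to 0$ by uniform convergence and the finiteness of $\mu$. Hence $\Hilbert_1$ is complete, and being an inner product space it is a Hilbert space.

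The main obstacle is the uniform bound itself, and specifically the realization that the energy integral $\int (f')^2$ alone controls only the oscillation of $f$ --- it annihilates constants --- so it cannot by itself pin down the size of $f$. The boundary term $\int_{\partial \gbar} f^2 \, d\mu$ is exactly what supplies the missing anchor, which is why the hypothesis $\mu(\partial \gbar) > 0$ is essential rather than a mere normalization.
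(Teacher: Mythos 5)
Your proof is correct and takes essentially the same route as the paper's: use positivity of $\mu$ to anchor the function at a boundary point, integrate the derivative along a path of bounded length via Cauchy--Schwarz to pass from the $\Hilbert_1$ norm to uniform control, then use completeness of $L^2(\graph)$ and the fundamental theorem of calculus to identify the limit; your packaging of this as the single a priori estimate $\|f\|_{\infty} \le C\|f\|_1$ is just a cleaner quantitative form of the paper's argument. Two trivial touch-ups: a path of length exactly $\le D$ need not exist (the metric is an infimum of path lengths, so take length $\le 2D$ as the paper does), and you should remark that your estimate also yields positive definiteness of the form (so that $\langle \cdot,\cdot\rangle_1$ really is an inner product), a point the paper verifies separately and you leave implicit.
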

 
\begin{proof}
Suppose first that $\| f \| _1^2 = \langle f,f \rangle _1 = 0$.  Then $f = 0$, $\mu$-almost everywhere  in $\partial \gbar$.
Pick $x_0 \in \partial \gbar $ with $f(x_0 ) = 0$.  For $x \in \gbar $, pick a path $\gamma $ of finite length joining $x_0$ to $x$.
Since $f$ is absolutely continuous and $f' = 0$ as an element of $L^2(\graph)$,
\[f(x) = \int_{\gamma } f'(t) \ dt = 0.\]
The form \eqref{Eprod} is thus positive definite, and so defines an inner product.  

Suppose $\{ f_n \}$ is a Cauchy sequence in $\Hilbert _1$, with
\[ \int_{\graph } (f'_n - f'_m)^2 +  \int_{\partial \gbar} (f_n-f_m)^2 \ d \mu \to 0, \quad m,n \to \infty .\]
Given $\epsilon > 0$ and $m,n$ large, there are points $x_0 \in \partial \gbar $ with $|f_n(x_0) - f_m(x_0)| < \epsilon $. 
For $x \in \gbar$, integration over a path $\gamma $ from $x_0$ to $x$ with length at most $2*{\rm diam}(\graph )$ gives
\[ |f_n(x) - f_m(x)| \le | f_n(x_0) - f_m(x_0)| + | \int_{x_0}^x f_n'(t) - f_m'(t) \ dt |\]
\[ \le \epsilon + 2{\rm diam}( \graph )^{1/2} \| f_n - f_m \|_1   .\]
Thus $\{ f_n \}$ is a uniformly convergent sequence of continuous functions on $\gbar $, with a continuous limit $f$.
 
Since $L^2(\graph )$ is complete, the sequence $\{f_n' \}$ converges in $L^2(\graph )$ to a function $g$.
Integration again gives
\[ f_n(x) - f_n(x_0) = \int_{\gamma } f_n'(t) \ dt .\]
The $L^2(\graph )$ convergence of $f_n'$ to $g$ implies $L^1$ convergence on the path from $x_0$ to $x$, so
\[f(x) - f(x_0) = \int_{x_0}^x g(t) \ dt .\]
Thus $f$ is absolutely continuous \cite[p. 110]{Royden}, $g(x) = f'(x)$ almost everywhere, and $\Hilbert _1$ is complete.

\end{proof}

\subsection{Harmonic functions}

A continuous function $f:\gbar \to \real $ is harmonic if (i) $D^2 f = 0$ on each edge, so $f$ is piecewise linear,
and (ii) $f$ satisfies the standard vertex conditions \eqref{Kerck} at each interior vertex.  
Say that a harmonic function $f$ has finite energy if $f \in \Hilbert _1$.  
Let $H_{fin}$ denote the set of finite energy harmonic functions on $\gbar $. 

Harmonic functions satisfy the mean value property at interior vertices.  
Assume that $v$ is an interior vertex with $N$ incident edges $e = [a_e,b_e]$ pointing away from $v$
so that each $a_e$ is identified with $v$.  
Suppose that $f$ is a function which is continuous at $v$, and whose restriction $f_e$ to $e$ is linear.
The identity  $f_e(a_e) = f_e(x) - (x - a_e)f_e' $ holds for $a_e \le x \le b_e$.
If $x \le \min _{e \sim v} (b_e)$ and $x-a_e$ has the same value on each edge, then 
\[ f(v) = \frac{1}{N} \sum_{n=1}^N f_e(x) - \frac{x-a_e}{N} \sum_{e \sim v} \partial _{\nu} f_e ,\]
from which the next result is obtained.     
     
\begin{lem}
If $f$ is linear on the edges incident on $v$ and continuous at $v$, 
then $f(v)$ is the mean value of the equidistant edge values $f_e(x)$ 
if and only if $\sum_{n=1}^N \partial _{\nu} f_e = 0$.
\end{lem}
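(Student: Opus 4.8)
The plan is to read off the equivalence directly from the displayed identity immediately preceding the statement, which already carries out all of the analytic work; the lemma is then a one-line consequence. First I would fix the orientation so that each incident edge $e$ has its endpoint $a_e$ identified with $v$ and points outward, whence the slope of the linear function $f_e$ is constant and equals the outward normal derivative $\partial _{\nu} f_e$. Choosing a single common value $s = x - a_e > 0$ on every incident edge (possible provided $s$ does not exceed the length of the shortest incident edge, so that the point at arclength $s$ from $v$ lies on each $e$), the continuity of $f$ at $v$ makes $f(v)$ unambiguous, and the stated identity
\[ f(v) = \frac{1}{N} \sum_{n=1}^N f_e(x) - \frac{s}{N} \sum_{e \sim v} \partial _{\nu} f_e \]
holds, with the equidistant edge values $f_e(x)$ all evaluated at the same distance $s$ from $v$.

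Next I would simply rearrange. The mean value property asserts precisely that $f(v) = \tfrac{1}{N} \sum_{n=1}^N f_e(x)$, so subtracting this from the identity above shows it holds if and only if the correction term $\tfrac{s}{N} \sum_{e \sim v} \partial _{\nu} f_e$ vanishes. Since $s > 0$ and $N \ge 1$, the scalar factor $s/N$ is nonzero, and therefore the correction term vanishes if and only if $\sum_{e \sim v} \partial _{\nu} f_e = 0$. This is exactly the claimed equivalence.

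There is essentially no obstacle here, since the identity does the heavy lifting; the only point meriting a word of care is that the equidistant evaluation point must exist simultaneously on all edges incident on $v$. This is guaranteed by taking $s \le \min_{e \sim v}(b_e - a_e)$, which is precisely the constraint $x \le \min_{e \sim v}(b_e)$ already imposed when the identity was derived, so no additional hypothesis is needed beyond continuity at $v$ and piecewise linearity.
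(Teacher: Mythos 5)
Your proof is correct and follows essentially the same route as the paper: the paper derives the identity $f(v) = \frac{1}{N}\sum_{n=1}^N f_e(x) - \frac{x-a_e}{N}\sum_{e \sim v}\partial_{\nu} f_e$ and obtains the lemma by exactly the rearrangement you describe, with the same constraint $x \le \min_{e\sim v}(b_e)$ ensuring the equidistant points exist. Your added remark that the factor $s/N$ is nonzero makes the equivalence explicit but introduces nothing beyond what the paper's one-line deduction already uses.
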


Recall that $\gbar $ is path connected and compact.
The mean value property for a harmonic function $f$ on a metric graph means that $f$ has an interior maximum or minimum 
on $\gbar $ if and only if $f$ is constant.  Moreover, $f$ has a maximum and minimum, which must occur on the $\partial \gbar$.

\subsection{The Dirichlet problem}

Let $\domain _{min}$ denote the continuous functions $f: \gbar \to \real $, with compact support in the interior of $\graph$,
which are infinitely differentiable on the (closed) edges of $\graph$, and which   
satisfy \eqref{Kerck} at each interior vertex.  $\domain _{min}$ is dense in $L^2(\graph )$, but the situation is different in $\Hilbert _1$.

\begin{prop} \label{R1}
Functions in the $\Hilbert _1$ closure of $\domain _{min}$ vanish on $\partial \gbar$.
The orthogonal complement of $\domain _{min}$ in $\Hilbert _1$ is the set $H_{fin}$ of finite energy harmonic functions.
\end{prop}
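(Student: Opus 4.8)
The plan is to prove the two assertions separately, using the integration-by-parts structure from \lemref{IBP} together with the completeness and uniform-convergence properties of $\Hilbert _1$ established in \propref{isHilbert}.

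\medskip

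\emph{First claim: functions in the $\Hilbert _1$-closure of $\domain _{min}$ vanish on $\partial \gbar $.} The key observation is that \propref{isHilbert} guarantees $\Hilbert _1$-convergence implies \emph{uniform} convergence on $\gbar $. So if $f_n \in \domain _{min}$ and $f_n \to f$ in $\Hilbert _1$, then $f_n \to f$ uniformly, hence pointwise everywhere on $\gbar $, in particular on $\partial \gbar $. Each $f_n$ has compact support in $\graph _{int}$, so for a fixed $x \in \partial \gbar $ I would like to argue $f_n(x) = 0$ for all $n$. The subtlety is that $x$ may be a limit point of the supports even though it lies outside each individual support; but since $x \notin \graph _{int}$ and each $f_n$ vanishes off a compact subset of $\graph _{int}$, and $f_n$ extends continuously to $\gbar $ with $f_n(x) = \lim_{y \to x} f_n(y) = 0$ (the approaching points eventually leave the compact support), we get $f_n(x) = 0$. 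Passing to the uniform limit gives $f(x) = 0$.

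\medskip

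\emph{Second claim: $\domain _{min}^{\perp} = H_{fin}$ in $\Hilbert _1$.} I would show the two inclusions. For $H_{fin} \subseteq \domain _{min}^{\perp}$: take $h \in H_{fin}$ and $g \in \domain _{min}$. Since $g$ has compact support in $\graph _{int}$, its support meets only finitely many edges, so the inner product reduces to an integral over a finite subgraph $\graph _0$ and the boundary measure term drops out (as $g = 0$ near $\partial \gbar $). Applying the integration-by-parts identity \eqref{ibp} on $\graph _0$ chosen large enough to contain the support of $g$, the boundary terms over $\partial \graph _0$ vanish because $g$ and $g'$ vanish there, leaving
\begin{equation}
\langle h, g \rangle _1 = \int_{\graph } h'g' = -\int_{\graph } h''\, g = 0,
\end{equation}
since $h'' = 0$ on each edge by harmonicity. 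For the reverse inclusion $\domain _{min}^{\perp} \subseteq H_{fin}$: suppose $f \in \Hilbert _1$ satisfies $\langle f, g\rangle _1 = 0$ for all $g \in \domain _{min}$. Testing against $g$ supported in a single open edge (away from vertices) forces $\int f'g' = 0$, and by the standard distributional argument on that interval $f'' = 0$ there, so $f$ is linear on each edge. Then testing against $g \in \domain _{min}$ supported near a single interior vertex $v$, integration by parts converts the condition into $\big(\sum_{e \sim v}\partial _{\nu} f_e(v)\big) g(v) = 0$; choosing $g(v) \neq 0$ yields the Kerckhoff condition \eqref{Kerck} at $v$. Hence $f$ is harmonic, and being in $\Hilbert _1$ it has finite energy, so $f \in H_{fin}$.

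\medskip

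\emph{The main obstacle} is the distributional/variational step identifying $f'' = 0$ and recovering the vertex condition from orthogonality: I must verify that $\domain _{min}$ contains enough test functions localized at each interior vertex and on each open edge to separate these two conditions, and that the integration by parts of \lemref{IBP} applies even though $f$ is only known to be absolutely continuous with $f' \in L^2$ rather than smooth. This requires care that the boundary measure $\mu$ term genuinely vanishes for the compactly supported test functions (immediate, since they are zero near $\partial \gbar$) and that approximating a single-vertex bump by admissible elements of $\domain _{min}$ is legitimate; the local finiteness of $\graph $ makes both manageable.
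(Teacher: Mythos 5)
Your proposal is correct and follows essentially the same route as the paper: uniform convergence from \propref{isHilbert} for the vanishing of boundary values, integration by parts (\lemref{IBP}) on a finite subgraph containing the support for the inclusion $H_{fin} \subseteq \domain_{min}^{\perp}$, and edge-localized then vertex-localized test functions to recover first $f''=0$ on each edge and then the Kerckhoff condition \eqref{Kerck} for the reverse inclusion. The only cosmetic difference is that at the vertex step you integrate by parts onto the complement function (legitimate, since it is already known to be linear on edges), whereas the paper moves the derivative onto the test function; the content is identical.
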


\begin{proof}
By \propref{isHilbert}, convergence in $\Hilbert _1$ implies uniform convergence,  so functions in the closure of $\domain _{min}$ vanish on $\partial \gbar$.

First suppose that $f \in \domain _{min}$ and $g \in H_{fin}$.  Since $f$ vanishes outside a finite graph, the integration by parts formula \lemref{IBP} 
yields  $\langle g,f \rangle _1 = -\int_{\graph } fg'' = 0$. The finite energy harmonic functions are thus orthogonal to $\domain _{min}$ in $\Hilbert _1$.

Suppose $g \in \Hilbert _1$ and for all $f \in \domain _{min}$ 
\[ \langle g,f \rangle _1 = \int_{\graph } g'f' + \int_{\partial \gbar} gf \ d\mu = 0.\]
The boundary integral is zero, so will not play a role.

Each edge $e \in \edgeset$ is identified with an interval $[a,b]$.
Consider the functions $f \in \domain _{min}$ with support in $(a,b)$. 
For such $f $, integration by parts gives
\[0 = \int_a^b g'f' = - \int_a^b gf'' .\]
As a function in $L^2[a,b]$ the restriction of $g$ to $[a,b]$ is orthogonal to all such $f''$, which implies \cite[p. 1291]{Dunford} that 
$g'' = 0$ on each edge.

Recall that functions in $\Hilbert _1$ are continuous on $\gbar$ by definition.  Returning to more general $f \in \domain _{min}$, suppose 
that $f$ is a nonzero constant in a small neighborhood of an interior  vertex $v$, and the support of $f$ lies in the union of the edges incident on $v$.
Since  $\langle g,f \rangle _1 = 0$, summing over the edges incident on $v$ gives
\[0 = \sum_{e \sim v} \int_{e} f'g' = - f(v) \sum_e \partial _{\nu} g_e(v) -  \sum_{e \sim v} \int_{e} fg'' .\]
But $g'' = 0$ on each edge, so $g$ satisfies the vertex conditions \eqref{Kerck}.  That is, $g \in H_{fin}$. 

\end{proof} 

\begin{cor} \label{minzer}
Suppose $f \in \Hilbert _1$.  Among all $g \in \Hilbert _1$ which agree with $f$ on $\partial \gbar $,
a unique harmonic function minimizes $\| g \| _1$.

If $f \in \Hilbert _1$ vanishes on $\partial \gbar $, then $\langle f, h \rangle _1 = 0$ for all harmonic functions $h \in \Hilbert _1$. 
\end{cor}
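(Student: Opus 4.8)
The plan is to read off everything from the orthogonal decomposition furnished by \propref{R1}. Since a set and its closure have the same orthogonal complement, the identity $H_{fin} = \domain_{min}^{\perp}$ of \propref{R1} gives $H_{fin} = (\overline{\domain_{min}})^{\perp}$, and because $\overline{\domain_{min}}$ is a closed subspace of the Hilbert space $\Hilbert_1$ (\propref{isHilbert}),
\[ \Hilbert_1 = \overline{\domain_{min}} \oplus H_{fin}. \]
The two assertions of the corollary will both follow from this splitting together with the maximum principle recorded above, namely that a harmonic function on $\gbar$ attains its maximum and minimum on $\partial\gbar$.

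First I would prove the second statement, since it also yields a useful characterization. Given $f \in \Hilbert_1$ vanishing on $\partial\gbar$, write $f = f_0 + h$ with $f_0 \in \overline{\domain_{min}}$ and $h \in H_{fin}$. By \propref{R1} the component $f_0$ vanishes on $\partial\gbar$, so $h = f - f_0$ is a harmonic function vanishing on $\partial\gbar$; by the maximum principle its maximum and minimum are both $0$, so $h \equiv 0$. Hence $f = f_0 \in \overline{\domain_{min}} = H_{fin}^{\perp}$, which is exactly the claim that $\langle f, h \rangle_1 = 0$ for every harmonic $h \in \Hilbert_1$. Read in reverse, this computation shows that the subspace of $\Hilbert_1$-functions vanishing on $\partial\gbar$ coincides with $\overline{\domain_{min}}$.

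For the first statement I would describe the constraint set geometrically. The functions $g \in \Hilbert_1$ agreeing with $f$ on $\partial\gbar$ are precisely $g = f + u$ with $u$ vanishing on $\partial\gbar$, so by the previous paragraph they form the closed affine subspace $f + \overline{\domain_{min}}$. Writing $f = f_0 + h$ as before and absorbing $f_0 \in \overline{\domain_{min}}$, this set equals $h + \overline{\domain_{min}}$ with $h$ harmonic. For any $g = h + w$ with $w \in \overline{\domain_{min}}$, the orthogonality of $h \in H_{fin}$ to $w$ gives $\|g\|_1^2 = \|h\|_1^2 + \|w\|_1^2$, minimized if and only if $w = 0$. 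Thus the unique minimizer is $g = h$, the harmonic orthogonal projection of $f$ onto $H_{fin}$.

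I do not expect a genuine obstacle: once \propref{R1} is available the content is routine Hilbert-space geometry. The one point deserving care is that the constraint set really is a closed affine subspace, that is, that vanishing on $\partial\gbar$ survives $\Hilbert_1$-limits. This is where \propref{isHilbert} is essential, since $\Hilbert_1$-convergence is uniform and so preserves pointwise vanishing on $\partial\gbar$; alternatively, the identification of that subspace with $\overline{\domain_{min}}$ obtained in the second paragraph makes its closedness immediate.
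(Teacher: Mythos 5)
Your proposal is correct and takes essentially the same route as the paper: both rest on the orthogonal splitting $\Hilbert _1 = \overline{\domain _{min}} \oplus H_{fin}$ furnished by \propref{R1}, the fact that the $\overline{\domain _{min}}$ component vanishes on $\partial \gbar$, the Pythagorean identity, and the maximum principle. If anything your version is slightly more complete, since by proving the second assertion first and identifying the constraint set as the closed affine subspace $h + \overline{\domain _{min}}$ you make the uniqueness of the minimizer explicit, a point the paper's proof leaves implicit.
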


\begin{proof}
Write $g = g_1+g_2$ with $g_1$ in the $\Hilbert _1$ closure of $\domain _{min}$ and $g_2 \in H_{fin}$.
As noted in the proof of \lemref{isHilbert}, a function in the $\Hilbert _1$ closure of $\domain _{min}$ must vanish 
on $\partial \gbar $.  Thus $g_2$ agrees with $f$ on $\partial \gbar$.  Since
\[ \| g \| _1^2 = \| g_1 \| _1^2 + \| g_2 \| _1^2,\]
$g_2$ the desired minimizer. 

Similarly, if $f \in \Hilbert _1$ vanishes on $\partial \gbar $ and $f = f_1\oplus f_2$, with $f_1$ in the $\Hilbert _1$ closure of $\domain _{min}$ and $f_2 \in H_{fin}$,
the harmonic part $f_2$ vanishes on $\partial \gbar$, forcing $f_2 = 0$.

\end{proof}

The Dirichlet problem for $\gbar$ can now be solved.  This approach emphasizes $H_{fin}$, an aspect not discussed in the proof in \cite{Carlson08}.
Use $1_E$ to denote the characteristic function of a set $E$; in this case $E \subset \partial \gbar$.
The existence of a rich collection of partitions ${\cal E}$ is a consequence of \propref{manycor}.

\begin{thm} \label{Eharm}
Suppose ${\cal E} = \{ E(n), n=1, \dots ,N \}$ is a finite partition  of $\partial \gbar $ by clopen sets.
For any function $F  = \sum_{n=1}^N c_n 1_{E(n)}$,
which is a linear combination of the characteristic functions of the sets $E(n)$,
there is a unique $f \in H_{fin}$ with $f = F$ on $\partial \gbar$. 
\end{thm}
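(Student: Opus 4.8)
The plan is to construct the desired harmonic function $f$ by first producing a convenient $\Hilbert_1$ extension of the boundary data $F$ into $\gbar$, and then projecting onto $H_{fin}$ using the orthogonal decomposition from \propref{R1}. First I would invoke \lemref{Afunk}: since each $E(n)$ is clopen in $\partial\gbar$ and its complement $\partial\gbar \setminus E(n)$ is also clopen (being the union of the other partition sets), there is a function $\phi_n \in \alg$ with $\phi_n = 1$ on $E(n)$ and $\phi_n = 0$ on $E(n)^c$. Setting $\Phi = \sum_{n=1}^N c_n \phi_n$ gives an element of $\alg$ whose boundary values equal $F$, namely $\Phi = c_n$ on each $E(n)$, so $\Phi = F$ on $\partial\gbar$.

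The key observation is that $\Phi \in \Hilbert_1$. Indeed, functions in $\alg$ are continuous on $\gbar$, are smooth on the open edges, and have $\phi' = 0$ outside a finite collection of edges; hence $\Phi'$ is supported on finitely many edges and is bounded there, so $\Phi' \in L^2(\graph)$, and the boundary term $\int_{\partial\gbar}\Phi^2\,d\mu$ is finite because $\mu$ is a finite measure and $\Phi$ is bounded. Thus $\Phi \in \Hilbert_1$ with $\Phi = F$ on $\partial\gbar$. Now I apply \corref{minzer} to $\Phi$: among all $g \in \Hilbert_1$ agreeing with $\Phi$ on $\partial\gbar$, there is a unique harmonic minimizer $f \in H_{fin}$, and by construction $f = \Phi = F$ on $\partial\gbar$. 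This produces the existence of a finite energy harmonic $f$ with the prescribed boundary values.

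For uniqueness, suppose $f_1, f_2 \in H_{fin}$ both equal $F$ on $\partial\gbar$. Then $h = f_1 - f_2 \in H_{fin}$ vanishes on $\partial\gbar$. The second statement of \corref{minzer} applies: since $h \in \Hilbert_1$ vanishes on $\partial\gbar$, its harmonic part must vanish, but $h$ is itself harmonic, so $h = 0$. Alternatively, one may argue directly from the maximum principle recorded at the end of \S3.2: a harmonic function on the compact path-connected space $\gbar$ attains its maximum and minimum on $\partial\gbar$, so if $h$ vanishes there it vanishes identically.

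The main obstacle I anticipate is the verification that $\Phi \in \Hilbert_1$ is genuinely justified rather than asserted, specifically confirming that the eventually-flat structure of $\alg$ controls both the Dirichlet integral $\int_\graph (\Phi')^2$ (finite support of $\Phi'$ among edges) and the boundary integral (finiteness of $\mu$). Everything else is a clean application of the Hilbert space projection in \propref{R1} and \corref{minzer}; the substantive analytic work has already been front-loaded into those results and into \lemref{Afunk}, so the theorem follows by assembling them. The one subtlety worth stating carefully is that the minimizer $f$ really does retain the boundary values $F$ — this rests on the fact, used repeatedly, that the $\Hilbert_1$-closure of $\domain_{min}$ consists of functions vanishing on $\partial\gbar$, so the minimization leaves the boundary trace unchanged.
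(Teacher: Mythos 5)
Your proposal is correct and follows essentially the same route as the paper's proof: use \lemref{Afunk} to build an element of $\alg$ (hence of $\Hilbert_1$) with boundary values $F$, apply \corref{minzer} to obtain the harmonic representative in $H_{fin}$, and conclude uniqueness from the maximum principle. Your extra details---forming $\Phi=\sum_n c_n\phi_n$ explicitly and verifying $\Phi\in\Hilbert_1$---are points the paper leaves implicit, but they are the same argument.
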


\begin{proof}

By \lemref{Afunk} there is a function $g \in \alg$ which agrees with $F$ on $\partial \gbar$.
Since $g \in \Hilbert _1$, \corref{minzer} implies there is an $f \in H_{fin}$ which agrees with $g$ on $\partial \gbar$.
The uniqueness of $f$ follows immediately from the maximum principle.  

\end{proof}

\begin{cor} \label{Dprob}
Suppose $G:\partial \gbar \to \real $ is continuous.  Then there is a unique harmonic function $g:\gbar \to \real$ with $g= G$ on $\partial \gbar$.
\end{cor}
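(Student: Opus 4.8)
The plan is to obtain $g$ as a uniform limit of the harmonic functions furnished by \thmref{Eharm}, verifying that both the boundary values and the defining properties of harmonicity survive the limit. Since $\partial \gbar$ is a compact metric space, $G$ is uniformly continuous. Fix $\epsilon > 0$ and choose, from the refining sequence of clopen partitions supplied by \propref{manycor}, a partition ${\cal E} = \{ E(n) \}$ whose sets have diameter small enough that $G$ oscillates by less than $\epsilon$ on each $E(n)$. Selecting a point in each $E(n)$ and letting $c_n$ be the value of $G$ there produces a step function $F = \sum_n c_n 1_{E(n)}$ with $\sup_{\partial \gbar} |F - G| < \epsilon$. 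Letting $\epsilon \to 0$ yields a sequence $F_k$ of such step functions converging uniformly to $G$ on $\partial \gbar$.

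Next I would apply \thmref{Eharm} to each $F_k$, obtaining harmonic functions $g_k$ with $g_k = F_k$ on $\partial \gbar$. The key estimate comes from the maximum principle noted above: since $\gbar$ is compact and path connected, a harmonic function attains its extrema on $\partial \gbar$. The difference $g_k - g_m$ is again harmonic, the conditions \eqref{Kerck} being linear, so $\sup_{\gbar} |g_k - g_m| = \sup_{\partial \gbar} |F_k - F_m|$. Hence $\{ g_k \}$ is uniformly Cauchy on $\gbar$ and converges uniformly to a continuous function $g$ with $g = G$ on $\partial \gbar$.

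It remains to check that $g$ is harmonic. On any single edge the restrictions $(g_k)_e$ are affine, and a uniform limit of affine functions on an interval is affine, so $D^2 g = 0$ on each edge and $g$ is piecewise linear. Moreover the slope of an affine function is a difference quotient of its endpoint values, so uniform convergence on the edge forces $\partial_\nu (g_k)_e(v) \to \partial_\nu g_e(v)$ at each incident vertex. Because $\graph$ is locally finite, each interior vertex meets only finitely many edges, so the finite sum in \eqref{Kerck} passes to the limit and $\sum_{e \sim v} \partial_\nu g_e(v) = 0$. Thus $g$ is harmonic. Uniqueness follows once more from the maximum principle: if two harmonic functions agree with $G$ on $\partial \gbar$, their difference is harmonic and vanishes on $\partial \gbar$, hence is identically zero.

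The step I expect to require the most care is the preservation of the interior vertex conditions \eqref{Kerck} under the merely uniform, rather than $\Hilbert _1$, convergence $g_k \to g$; this hinges on the elementary fact that uniform convergence of affine functions controls their slopes, together with local finiteness to interchange the limit with the vertex sum. Note also that the desired $g$ need not lie in $H_{fin}$, so the $\Hilbert _1$ orthogonality machinery is not available here and the maximum principle does the essential work. The remainder is a standard approximation argument.
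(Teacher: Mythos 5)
Your proof is correct and follows essentially the same route as the paper: approximate $G$ uniformly by clopen step functions, extend each via \thmref{Eharm}, and use the maximum principle to get a uniformly Cauchy sequence whose limit is the desired harmonic function. The only difference is that you spell out two points the paper leaves implicit --- that the uniform limit of harmonic functions is harmonic (slopes of affine functions converge, and local finiteness lets \eqref{Kerck} pass to the limit) and the uniqueness argument --- both of which are handled correctly.
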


\begin{proof}
Since $\partial \gbar$ is compact, continuous functions $f: \partial \gbar \to \real$ are uniformly continuous.  Consequently,
the functions $F = \sum_{n=1}^N c_n 1_{E(n)}$ from the proof of \thmref{Eharm} are uniformly dense in the continuous functions on $\gbar$. 

Using \thmref{Eharm}, pick a sequence $f_n: \gbar \to \real $ of harmonic functions converging uniformly to $G$ on $\partial \gbar $.
By the maximum principle $ f_n: \gbar \to \real $ is a uniformly Cauchy sequence, which converges uniformly to the desired harmonic function $g$.  
\end{proof}

Note that not every harmonic function $f:\gbar \to \real $ is in $\Hilbert _1$.

\begin{prop}
Suppose $\partial \gbar $ is not a set of isolated points.  Then
there are continuous functions $F:\partial \gbar \to \real $ whose harmonic extensions are not in $\Hilbert _1$.
\end{prop}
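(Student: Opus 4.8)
The plan is to exhibit a single continuous boundary function $F$ whose (unique, by \corref{Dprob}) harmonic extension $h$ cannot lie in $\Hilbert _1$. The mechanism is energetic: I would choose $F$ so that \emph{every} $g \in \Hilbert _1$ with $g = F$ on $\partial \gbar$ has $\int_\graph (g')^2 = \infty$. Since membership in $\Hilbert _1$ forces finite Dirichlet energy, and $h$ would be such a $g$, this gives a contradiction. (Equivalently, via \corref{minzer}, $h$ is the energy minimizer among all $\Hilbert _1$ extensions of its own boundary data, so an infinite minimal energy means no finite-energy extension exists.) Thus the whole problem reduces to a lower bound on $\int_\graph (g')^2$ in terms of boundary values alone.

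The construction lives near a non-isolated point. Fix $p \in \partial \gbar$ that is not isolated, so infinitely many distinct boundary points accumulate at $p$. Using the nested clopen partitions of \propref{manycor}, I would build pairwise disjoint clopen sets $D_1, D_2, \dots \subset \partial \gbar$, each containing at least two points, with $\operatorname{diam}(D_k) =: \delta_k \to 0$ and $d(D_k, p) \to 0$, and -- crucially -- well separated relative to their size: arrange $2\delta_k < \epsilon_k < \tfrac13\, d\big(D_k, \{p\} \cup \bigcup_{j \ne k} D_j\big)$ for suitable $\epsilon_k > 0$. Following the separation technique in the proof of \thmref{tdeqwc}, set $\Omega_k = N_{\epsilon_k}(D_k) \setminus \graph _{\epsilon_k}$; these are pairwise disjoint open subsets of $\gbar$ whose boundary traces are exactly the $D_k$. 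Split each $D_k = B_k \sqcup C_k$ into nonempty clopen pieces and define $F = \sqrt{\delta_k}$ on $B_k$, $F = 0$ on $C_k$ and on $\partial \gbar \setminus \bigcup_k D_k$. Since the $D_k$ are clopen, accumulate only at $p$, and $\sqrt{\delta_k} \to 0$, the function $F$ is continuous on $\partial \gbar$.

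For the lower bound, take any $g \in \Hilbert _1$ agreeing with $F$ on $\partial \gbar$. Pick $b_k \in B_k$, $c_k \in C_k$; as in \propref{isHilbert}, $g$ is absolutely continuous along a finite-length path $\gamma_k$ joining them, so choosing $\gamma_k$ of length at most $2\delta_k$ gives, by Cauchy--Schwarz,
\[ \sqrt{\delta_k} = |g(b_k) - g(c_k)| = \Big| \int_{\gamma_k} g' \Big| \le (2\delta_k)^{1/2} \Big( \int_{\gamma_k} (g')^2 \Big)^{1/2}, \]
hence $\int_{\gamma_k} (g')^2 \ge \tfrac12$. Because this path has length $< \epsilon_k$ and starts on $D_k \subset \partial \gbar$, every point of it lies within $\epsilon_k$ of $D_k$ and within $\epsilon_k$ of $\partial \gbar$, so $\gamma_k \subset \Omega_k$; the $\Omega_k$ being pairwise disjoint, so are the $\gamma_k$, and therefore $\int_\graph (g')^2 \ge \sum_k \int_{\gamma_k}(g')^2 \ge \sum_k \tfrac12 = \infty$. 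This contradicts $g \in \Hilbert _1$, completing the argument.

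The step I expect to be the main obstacle is the disjointness bookkeeping in the middle paragraph: to add the per-pair energy contributions into a lower bound for the total Dirichlet integral, the regions carrying those contributions must be genuinely disjoint, even though all the $D_k$ pile up at the single point $p$. This is exactly why the sets must be chosen well separated relative to their diameters ($\epsilon_k$ larger than the path length but smaller than a third of the distance to everything else), so that the short connecting paths cannot escape $\Omega_k$ and the regions $\Omega_k = N_{\epsilon_k}(D_k) \setminus \graph _{\epsilon_k}$ stay pairwise disjoint. Verifying that such $D_k$ and $\epsilon_k$ can be extracted from \propref{manycor}, and that each $\Omega_k$ has boundary trace exactly $D_k$ with the short path trapped inside (using the finiteness of $\graph _{\epsilon_k}$ from \lemref{approx}), is the technical heart of the proof; the energy estimate itself is then a one-line Cauchy--Schwarz.
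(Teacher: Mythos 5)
Your strategy is genuinely different from the paper's, and in spirit it is the more robust one. The paper proves the H\"older-type bound \eqref{Lip} for functions in $H_{fin}$ and then asserts that the single function $F(x)=\sqrt{d(x,x_0)}$, $x_0$ a limit point of $\partial\gbar$, cannot satisfy \eqref{Lip}; note that this assertion is delicate, since $|\sqrt{a}-\sqrt{b}|^2\le|a-b|$ shows $\sqrt{d(\cdot,x_0)}$ \emph{does} satisfy a H\"older-$1/2$ bound with constant $1$, so no single application of \eqref{Lip} yields a contradiction. Your plan replaces this by an additive argument: a fixed positive energy cost ($\ge 1/2$) on each of infinitely many disjoint regions. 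Your reduction is correct (if no $g\in\Hilbert_1$ has boundary values $F$, then in particular the harmonic extension from \corref{Dprob} is not in $\Hilbert_1$), your per-pair Cauchy--Schwarz estimate is exactly the computation in \eqref{Lip}, and your $F$ is continuous. Summing over scales is arguably what a complete proof requires.

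However, the step you flag as the technical heart is a genuine gap, and it cannot be repaired as stated: clopen sets $D_k$ with at least two points whose distance to $\{p\}\cup\bigcup_{j\ne k}D_j$ exceeds six times their diameter need not exist. Concretely, take vertices $v_n,y_n$ with edges $[v_n,v_{n+1}]$ and $[v_n,y_n]$, both of length $4^{-n}$. Then $\gbar$ is compact, $\partial\gbar=\{y_0,y_1,\dots\}\cup\{p\}$ with $p=\lim v_n$ the unique non-isolated point. The only clopen subsets of $\partial\gbar$ not containing $p$ are the finite sets of $y_n$'s, and any such set $D$ with at least two points, least index $n_0$, has $\mathrm{diam}(D)\ge 2\cdot 4^{-n_0}$ while $d(D,p)\le \tfrac{7}{12}\,4^{-n_0}$: the diameter always exceeds the distance to the rest of the boundary, so no admissible $\epsilon_k$ exists. (A secondary slip: a path within $\epsilon_k$ of $\partial\gbar$ can still meet $\graph_{\epsilon_k}$, since $\graph_{\epsilon_k}$ is a union of whole closed edges; but disjointness of the $N_{\epsilon_k}(D_k)$ is what you actually need, and that is what fails.) The proposition is still true for this graph --- $F(y_n)=2^{-n}$, $F(p)=0$ works --- but there the connecting paths for the pairs $(y_{2k},y_{2k+1})$, namely $y_{2k}\to v_{2k}\to v_{2k+1}\to y_{2k+1}$, are pairwise edge-disjoint for graph-structural reasons, even though every metric neighborhood $N_{2\delta_k}(D_k)$ contains $p$ and hence all later pairs. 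So your energy-summation scheme can be rescued, but the disjointness must be extracted from the paths themselves rather than from disjoint metric neighborhoods of the $D_k$; alternatively one can avoid the construction entirely by a non-constructive route, e.g.\ a closed-graph argument showing that if every continuous $F$ had a finite-energy harmonic extension then $\|h_F\|_1\le C\|F\|_\infty$, which indicator functions of small clopen sets near $p$ contradict via \eqref{Lip}.
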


\begin{proof}

A function $f \in H_{fin}$ will satisfy a Lipschitz condition.  Suppose $x,y \in \gbar $, and $\gamma $ is a path with length
at most $2d(x,y)$ from $x$ to $y$.  Then
\begin{equation} \label{Lip}
|f(y) - f(x)|^2 = |\int_{\gamma} f'(t) \ dt |^2 \le \int_{\gamma} (f')^2 \int_{\gamma} 1 \le 2d(x,y) \| f \| _1^2 .
\end{equation}    

Suppose $x_0 $ is a limit point of $\partial \gbar $.  Consider the function 
$ F(x) = \sqrt{d(x,x_0)}$.  Since $x^{1/2}$ is continuous on $[0,\infty )$ but $(x^{1/2})' = x^{-1/2}/2$, the function $F$
cannot satisfy the Lipschitz condition \eqref{Lip}.
\end{proof}

\subsection{Level sets}

Assume $\graph _0$ is a connected finite graph, with $E $ and $E^c = \partial \graph _0 \setminus E$  nonempty subsets of $\partial \graph _0$ .
Suppose $f:\graph _0 \to \real$ is harmonic, with $f(v) = 1$ for $v \in E$ and $f(v) > 1$ for $v \in E^c$. 
By the maximum principle, $\partial _{\nu} f(v) > 0$ for $v \in E$.  It will be helpful to have a similar result for  infinite
graphs $\graph $.  

A point $x \in \graph $ is a critical point for $f$ if $x$ is a vertex or $f'(x) = 0$.  
A number $y \in \real $ is a critical value for $f$ if $f^{-1}(y)$ contains a critical point.
Points in the range of $f$ that are not critical values are regular values.
If $f$ is harmonic and $f'(x) = 0$ for some $x $ in an edge $e$, then $f$ is constant on $e$.
Since $\vertexset$ and $\edgeset$ are countable, a harmonic function $f$ has a countable set of critical values.

\begin{lem} \label{finset}
Suppose $f:\gbar \to \real$ is harmonic, and $c$ is a regular value of $f$.
Assume there is no $x \in \partial \gbar $ with $f(x) = c$.  
Then $f^{-1}(c)$ is a finite set.
\end{lem}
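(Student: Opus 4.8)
The plan is to localize $f^{-1}(c)$ inside a finite subgraph and then count crossings edge by edge. First I would record the structural consequences of the two hypotheses. Because $c$ is a regular value, $f^{-1}(c)$ contains no vertex and no point where $f'$ vanishes; since $f$ is linear on each edge, this means that on any edge $f$ is either strictly monotone (so it attains the value $c$ at most once there) or constant with a value different from $c$ (a constant value $c$ on an edge would make every point of that edge a critical point lying in $f^{-1}(c)$, contradicting regularity). Hence $f^{-1}(c)$ meets each edge in at most one point. The hypothesis that no $x \in \partial \gbar$ satisfies $f(x) = c$ says precisely that $f^{-1}(c) \subseteq \graph_{int}$.

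The key reduction is to show that $f^{-1}(c)$ stays a fixed positive distance away from the boundary. The set $f^{-1}(c)$ is closed in $\gbar$, hence compact, and $\partial \gbar$ is also compact; by hypothesis the two are disjoint. Since two disjoint compact subsets of a metric space have positive distance, $\delta := d(f^{-1}(c), \partial \gbar) > 0$ (the statement being vacuous if $f^{-1}(c)$ is empty). Fixing any $\epsilon$ with $0 < \epsilon < \delta$, every point $p \in f^{-1}(c)$ satisfies $d(p, \partial \gbar) \ge \delta > \epsilon$, so the edge carrying $p$ contains a point at distance at least $\epsilon$ from $\partial \gbar$ and therefore belongs to $\graph_{\epsilon}$; thus $f^{-1}(c) \subseteq \graph_{\epsilon}$.

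Now I would invoke \lemref{approx}: since $\gbar$ is compact, $\graph_{\epsilon}$ is a finite graph. Combining this with the per-edge bound from the first paragraph, $f^{-1}(c)$ meets only finitely many edges and meets each in at most one point, so it is finite.

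I expect the one genuinely essential step to be the positive-separation claim $d(f^{-1}(c), \partial \gbar) > 0$: this is exactly where the two hypotheses cooperate, the ``no boundary value $c$'' assumption supplying disjointness and the compactness of $\gbar$ supplying the positive gap, which is what lets \lemref{approx} take over. By contrast, the per-edge counting is routine on a single interval, and the reduction to a finite graph is then immediate. (An alternative route avoiding $\graph_{\epsilon}$ extracts a convergent subsequence from infinitely many distinct points of $f^{-1}(c)$; by continuity its limit has value $c$, so it lies in $\graph_{int}$ and, being neither a boundary point nor a vertex, sits in the interior of an open edge --- forcing two distinct $c$-crossings on that edge, the same contradiction organized sequentially.)
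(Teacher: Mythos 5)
Your proposal is correct, and its main route is organized differently from the paper's. The paper argues by contradiction with sequential compactness alone: if $f^{-1}(c)$ were infinite, compactness of $\gbar$ would give distinct points $x_n \to z$ with $f(z)=c$; the hypotheses force $z$ to be interior to an edge, and since infinitely many $x_n$ eventually lie on that edge, linearity makes $f$ constant there, contradicting regularity of $c$. Your main argument is instead direct: the positive-separation step $d(f^{-1}(c),\partial\gbar)>0$ (valid, since both sets are compact and disjoint) localizes $f^{-1}(c)$ inside $\graph_{\epsilon}$, which is finite by \lemref{approx}, and the per-edge count (a linear function on an interval hits a regular value at most once) finishes the proof. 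What your route buys is a quantitative bound --- $f^{-1}(c)$ has at most as many points as $\graph_{\epsilon}$ has edges --- and a direct rather than contradiction-based structure, at the cost of invoking \lemref{approx} (whose own proof repackages the same compactness argument the paper uses inline). The parenthetical alternative you sketch at the end is essentially the paper's proof verbatim, so you have in effect identified both arguments; the only cosmetic slip is the paper's stray reference to ``$h$'' where it means $f$, which your version avoids.
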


\begin{proof}
If $f^{-1}(c)$ were an infinite set, then by compactness there would be an infinite
sequence of distinct points $\{ x_n \} \subset f^{-1}(c)$ converging to a point $z$, with $f(z) = c$.
Since $z \notin \partial \gbar$ and $c$ is a regular value, $z$ is an interior point of some edge $e$.  Since $f$ is harmonic and $x_n \to z$, 
$f$ must be constant on $e$, contradicting the assumption that $c$ is a regular value for $h$.
\end{proof}

\begin{lem} \label{goodt}
Suppose that $E $ and $E^c = \partial \gbar \setminus E $ are nonempty clopen subsets of $\partial \gbar$.
Assume that $f:\gbar \to \real$ is harmonic, with $ f(x) =  C \ge 0$ for $ \ x \in E$, and  $f(x) > C$ for  $ x \in E^c$.
Given $\epsilon > 0$ there is a $t > C$ such that $d(x,E ) < \epsilon $ if $f(x) \le t$.
\end{lem}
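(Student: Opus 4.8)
The plan is to argue by contradiction, combining the compactness of $\gbar$ with the minimum principle for harmonic functions noted earlier (a nonconstant harmonic function on $\gbar$ has no interior extremum and attains its minimum on $\partial \gbar$). The first step I would take is to observe that $C$ is the global minimum value of $f$. Since $\partial \gbar = E \cup E^c$, with $f = C$ on $E$ and $f > C$ on $E^c$, the minimum of $f$ over the boundary equals $C$; because $f$ is harmonic, its minimum over $\gbar$ is attained on $\partial \gbar$, so $f(x) \ge C$ for every $x \in \gbar$.

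Next I would negate the conclusion. If no admissible $t$ exists, then for every $t > C$ there is a point $x$ with $f(x) \le t$ but $d(x, E) \ge \epsilon$; applying this with $t = C + 1/n$ produces a sequence $\{ x_n \}$ satisfying $f(x_n) \le C + 1/n$ and $d(x_n, E) \ge \epsilon$. Since $\gbar$ is compact, I may pass to a subsequence converging to some $z \in \gbar$. The function $x \mapsto d(x, E)$ is $1$-Lipschitz for the metric that extends continuously to $\gbar$, hence continuous, so $d(z, E) \ge \epsilon$. Meanwhile the two-sided bound $C \le f(x_n) \le C + 1/n$ together with the continuity of $f$ forces $f(z) = C$.

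Finally I would locate $z$ and extract the contradiction. Because $d(z, E) \ge \epsilon > 0$, the point $z$ is not in $E$; and it cannot lie in $E^c$, since $f > C$ there while $f(z) = C$. Hence $z \notin E \cup E^c = \partial \gbar$, so $z$ is an interior point of $\gbar$ at which $f$ attains its minimum value $C$. As $E^c$ is nonempty, $f$ is not constant, and this interior minimum contradicts the minimum principle; therefore some $t > C$ has the asserted property. I expect the only delicate step to be this final bookkeeping that pins $z$ down as an interior minimizer: one must simultaneously use continuity of the distance function to keep the limit $z$ away from $E$ and the strict inequality on $E^c$ to keep it off the remainder of the boundary, after which the minimum principle applies at once.
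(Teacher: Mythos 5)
Your proof is correct and follows essentially the same route as the paper's: argue by contradiction, extract a convergent subsequence from points with $f(x_n) \le C + 1/n$ and $d(x_n,E) \ge \epsilon$ using compactness of $\gbar$, show the limit $z$ satisfies $f(z) = C$ yet lies off $\partial \gbar$, and contradict the maximum (minimum) principle. If anything, your version is slightly more careful than the paper's, which takes $x_n \in f^{-1}(t_n)$ rather than the weaker $f(x_n) \le t_n$ that the negation actually provides, and which leaves implicit the global bound $f \ge C$ that you establish at the outset.
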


\begin{proof}
Since $E^c$ is compact, $y = \min _{x \in {E^c}} f(x) > C$.
Arguing by contradiction, assume there is a sequence $t_n \to C$ and points $x_n \in f^{-1}(t_n)$ with $d(x_n, E ) \ge \epsilon $. 
By compactness of $\gbar$ the sequence $\{x_n \}$ has a subsequential limit $z$, with $f(z) = C$.
Since $z \notin E^c$ and $d(z,E) \ge \epsilon $, $z$ must be in $\graph $, contradicting the maximum principle.
\end{proof}

Using the hypotheses of \lemref{goodt}, for $\epsilon > 0$ select a regular value $t$, with $C < t < \min_{x \in E^c} f(x)$
and $d(x,E ) < \epsilon $ if $f(x) \le t$.
By \lemref{finset} the set $f^{-1}(t)$ is a finite set of points interior to edges of $\graph $.  
Add the vertices $f^{-1}(t)$ to $\graph $, subdivide the corresponding edges, and call the resulting graph $\widetilde{\graph}$.  
The graph $\widetilde{\graph}$ is now replaced by the set
$\graph _t =  \widetilde{\graph} \cap f^{-1}[t,\infty ) $.      

\begin{lem} \label{posgraph}
$\graph _t$ is a subgraph of $\widetilde{\graph}$ with $f^{-1}(t) \subset \partial \graph _t$. Moreover, $\partial _{\nu}f(x) > 0$ when $f(x) = t$.
\end{lem}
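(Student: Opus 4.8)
The plan is to read off all three assertions from a single structural observation: because $\widetilde{\graph}$ was formed by subdividing $\graph$ at every point of $f^{-1}(t)$, the value $t$ is never attained in the interior of an edge of $\widetilde{\graph}$. On each closed edge $e$ of $\widetilde{\graph}$ the harmonic function $f$ is affine, hence monotonic; since $f \neq t$ on the open edge, $f$ keeps a constant sign relative to $t$ there, so either $f > t$ throughout the interior of $e$ or $f < t$ throughout the interior of $e$.

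For the subgraph claim, I would declare an edge $e$ of $\widetilde{\graph}$ to be an edge of $\graph_t$ precisely when $f \geq t$ on all of $e$, and a vertex $v$ to be a vertex of $\graph_t$ precisely when $f(v) \geq t$. By the dichotomy above, an edge lies in $\graph_t$ exactly when $f > t$ on its interior, and continuity then forces $f \geq t$ at both endpoints; thus every edge included in $\graph_t$ has both endpoints among its vertices. This shows $\graph_t = \widetilde{\graph} \cap f^{-1}[t,\infty)$ is genuinely a subgraph of $\widetilde{\graph}$ (not necessarily finite, since it may reach $\partial \gbar$ along $E^c$, where $f > t$).

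Next I would locate the points of $f^{-1}(t)$ inside $\graph_t$. As recorded via \lemref{finset}, each such $x$ is interior to an edge of $\graph$, so after subdivision $x$ is a degree-two vertex of $\widetilde{\graph}$ with exactly two incident edges $e_+$ and $e_-$. Because $t$ is a regular value, $x$ is not a critical point, so $f'(x) \neq 0$ and $f$ is strictly monotonic across $x$; hence along one incident edge $f > t$ near $x$ and along the other $f < t$ near $x$. By the dichotomy, exactly one of $e_+,e_-$ lies in $\graph_t$, so $x$ has degree one in $\graph_t$. Since the boundary of a graph contains all degree-one vertices, $x \in \partial \graph_t$, giving $f^{-1}(t) \subset \partial \graph_t$.

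Finally, the sign of the normal derivative follows from the same picture: the unique edge $e$ of $\graph_t$ incident on $x$ is the one along which $f > t$, while $f(x) = t$. Computing $\partial_\nu f_e(x)$ in outward pointing coordinates means identifying $x$ with $a_e$ and differentiating as one moves into $e$, that is, into the region where $f$ exceeds $t$; the affine function $f_e$ therefore increases from its value $t$ at $x$, so $\partial_\nu f_e(x) > 0$. The only step requiring genuine care is the orientation bookkeeping in this last paragraph---matching the ``outward pointing'' convention to the side of $x$ on which $f$ exceeds $t$---together with the repeated appeal to the regularity of $t$ to guarantee $f'(x) \neq 0$ and hence the clean one-edge-up/one-edge-down dichotomy; everything else is a direct consequence of $f$ being affine on edges.
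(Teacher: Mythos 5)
Your proof is correct and takes essentially the same approach as the paper's: the sign dichotomy for $f-t$ on edges of $\widetilde{\graph}$, the observation that each regular point of $f^{-1}(t)$ has exactly one incident edge inside $\graph_t$ and hence degree one (so it is a boundary vertex), and positivity of $\partial_\nu f$ because the retained edge is the one on which $f$ exceeds $t$. Your orientation bookkeeping in the last step is just a more explicit version of the paper's one-line remark that $t$ is the minimum value of $f$ on $\graph_t$.
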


\begin{proof}
Suppose $e$ is an edge of $\widetilde{\graph}$,  with vertices $v_1,v_2$.
If $x \in e$ with $f(x) = t$, then $f'(x) \not= 0$ and $\graph _t$ contains only one of $[v_1,x]$ or $[x,v_2]$.
The point $x$ is then a degree one vertex in $\graph _t$, so is a boundary vertex.
If $e$ contains no point $x$ with $f(x) = t$, then $e$ is either entirely in $\graph _t$, or in the complement.  Thus $\graph _t$ is the union of closed
edges of $\widetilde{\graph}$.

Finally, $\partial _{\nu}f(x) > 0$ since $t$ is a minimum value for $f$ on $\graph _t$.  

\end{proof}

The volume of $\graph $ is its Lebesgue measure, i.e. the sum of the edge lengths.  
The volume of the $\epsilon $-neighborhood of the clopen set $E \subset \partial \gbar$ is the  Lebesgue measure of $N_{\epsilon}(E)$.

\begin{prop}
Suppose $\epsilon > 0$, $N_{\epsilon}(E)$ has finite volume, and $t$ is chosen so that $\{ f \le t \} \subset  N_{\epsilon}(E)$.
Then every $x \in \graph $ with $f(x) = t$ can be connected to $\partial \gbar $ by a path $\gamma$ in the set $\{ f \le t \}$,
and $\gamma $ can be chosen to have a single limit point in $\partial \gbar $. 
\end{prop}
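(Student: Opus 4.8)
The plan is to descend from $x$ along a path of finite length on which $f$ is non-increasing, reaching $\partial \gbar$; the finiteness of the length then forces a single limit point. Three ingredients are needed: a minimum-principle argument showing that the component of $\{ f \le t \}$ containing $x$ meets $E$, a finite-volume argument bounding the length of a non-repeating descent path, and a completeness argument producing the single limit point.

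First I would identify the target. Since $t$ is a regular value with $C < t < \min_{x \in E^c} f$, no boundary point has $f$-value in $(C,t]$: on $E$ we have $f = C$ and on $E^c$ we have $f > t$. Hence $\{ f \le t \} \cap \partial \gbar = E$, and by the minimum principle $f \ge C$ on all of $\gbar$. Let $K$ be the connected component of the closed set $\{ f \le t \}$ containing $x$; I claim $K \cap E \ne \emptyset$. By compactness of $\gbar$, $f$ attains $m = \inf_K f$ at some $z \in K$. If $z \in \partial \gbar$ then $z \in E$ and $m = C$, as desired. Otherwise $z$ is interior, and one cannot strictly descend from $z$ inside $K$: any descending direction would reach values $< m$ at points still lying in $\{ f \le t \}$ and joined to $z$, contradicting the infimum. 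By the Kerckhoff condition \eqref{Kerck} this forces every edge incident to $z$ to be constant at level $m$. Propagating through vertices (again by \eqref{Kerck}, a rim vertex of such a plateau would carry a strictly descending exit edge, which is excluded) shows the plateau containing $z$ is closed under adjacency; by connectedness of $\graph$ it would exhaust $\graph$ and force $f \equiv m$, contradicting non-constancy, unless the plateau meets $\partial \gbar$. A boundary point of the plateau lies in $E$ with value $m$, so again $m = C$ and $K \cap E \ne \emptyset$.

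Next I would build the path. Because $\{ f \le t \} \subset N_{\epsilon}(E)$ has finite volume, any path traversing each edge at most once has length bounded by that volume. Starting at $x$, where $f'(x) \ne 0$ since $t$ is regular, I follow a direction of strict decrease to the next vertex; at an interior vertex with $f > C$, \eqref{Kerck} guarantees a further descent edge, and a plateau (a finite-volume subgraph possessing a strictly descending rim exit by the argument above) can be crossed in finite length to such an exit. Strict descent off plateaus keeps the trajectory simple, crossing each plateau at most once keeps edge usage non-repeating, and the level sets encountered are finite by \lemref{finset}; hence the accumulated length stays below the volume of $N_{\epsilon}(E)$. By the component analysis, a maximal descent of this kind cannot terminate at an interior point, so it approaches $\partial \gbar$, i.e. $E$.

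Finally, parametrizing $\gamma$ by arc length makes it $1$-Lipschitz, so $d(\gamma(s),\gamma(s')) \le |s - s'|$; since the total length $L$ is finite and $\gbar$ is complete, $\gamma(s)$ is Cauchy as $s \to L$ and converges to a unique point $p \in \gbar$. As $\gamma$ remains in the closed set $\{ f \le t \}$ and descends to the boundary, $p \in \{ f \le t \} \cap \partial \gbar = E$, which is the single limit point in $\partial \gbar$. The main obstacle is the middle step: coordinating strict descent, non-repetition of edges, and genuine arrival at $\partial \gbar$ so that the length stays finite and the trajectory is not trapped at an interior level. The two structural facts that resolve this are the minimum principle, which rules out interior local minima above $C$ and (via \eqref{Kerck}) supplies a descending rim exit for every plateau above $C$, and the finite-volume hypothesis on $N_{\epsilon}(E)$, which turns "no repeated edges" into "finite length".
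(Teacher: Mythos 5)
Your descent construction (strict decrease along edges, with \eqref{Kerck} supplying a descending exit edge at each interior vertex), the non-repetition of edges, the finite-length bound from the finite volume of $N_{\epsilon}(E)$, and the final Cauchy/completeness argument are all sound; indeed your arc-length Cauchy argument for uniqueness of the limit point is cleaner than the paper's, which instead uses the nested clopen partitions of \propref{manycor} together with the vanishing tail length. But there is a genuine gap at the pivotal step where you write ``By the component analysis, a maximal descent of this kind cannot terminate at an interior point, so it approaches $\partial \gbar$.'' The component analysis shows only that the connected component $K$ of $\{ f \le t \}$ containing $x$ meets $E$; that is a statement about the set $K$, not about your particular path. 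What must be excluded is that the path, after traversing infinitely many distinct edges, converges (by your own Cauchy argument) to a point $p$ lying in $\graph$ itself. The fact that the greedy construction never \emph{halts} at an interior point (which follows from \eqref{Kerck}, not from the component analysis) does not rule this out: ``the walk has no last interior point'' and ``the limit of the walk is not interior'' are different claims, and nothing you have proved connects them.

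The missing ingredient is exactly \lemref{approx}, which you never invoke and which the paper uses for precisely this purpose: for each $\epsilon' > 0$ the subgraph $\graph _{\epsilon'}$ is finite. Since your path uses each edge at most once and, in the nonterminating case, uses infinitely many distinct edges, all but finitely many of those edges lie outside $\graph _{\epsilon'}$, hence every point of them is within $\epsilon'$ of $\partial \gbar$. Therefore $d(\gamma (s), \partial \gbar ) \to 0$, and the Cauchy limit $p$ lies in the closed set $\partial \gbar$; combined with $\{ f \le t \} \cap \partial \gbar = E$ this finishes your argument. (Equivalently, one can argue from local finiteness that a small ball about any point of $\graph$ meets only finitely many edges, contradicting convergence of a path with infinitely many distinct edges.) A secondary remark: the plateau machinery in your second and third paragraphs is superfluous. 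Since $f$ is harmonic, it is linear on edges, so an edge entered in a strictly descending direction is traversed with strict descent, and at each interior vertex \eqref{Kerck} then supplies a strictly descending exit; the walk never meets a plateau, which is how the paper keeps its proof short.
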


\begin{proof}

If $f(x) = t$,  then $x$ is interior to an edge $e(0)$, and $f_{e(0)}'(x) \not= 0$.  Walk in the direction of decreasing $f$ until you hit a vertex $v$.
If $v$ is not a boundary vertex, then since $\partial _{\nu} f_{e(0)}(v) > 0$ and \eqref{Kerck} holds, there is another edge $e(1)$ incident on $v$ with $\partial _{\nu} f_{e(1)} < 0 $.
Walk along $e(1)$, and then continue in this fashion.  Either a boundary vertex is encountered after finitely many steps, or 
there is a path $\gamma $ with infinitely many distinct edges $e(n)$.  By \lemref{approx} the distance from $\gamma (s)$ to $\partial \gbar$ has
limit zero as $n \to \infty $.

Let $z$ be a limit point of $\gamma $.  By \propref{manycor} there is a sequence $\{ {\cal E }_j, j = 1,2,3,\dots \} $ of finite partitions of 
$\partial \gbar $ by clopen sets, with ${\cal E }_{j+1}$ a refinement of ${\cal E }_j$,
such that the diameter of each set in $ {\cal E }_j $ is less than $1/j$.  Let $z \in E_k $ with $E_k \in  {\cal E }_k$.

Since ${\cal E }_k$ is a finite partition by clopen sets there is a $\delta > 0$ such that $d(x,y) > \delta $ if $x,y$ lie in 
distinct sets of ${\cal E }_k$.  Since the sum of the lengths of the edges in $\gamma $ is finite, any limit of $\gamma $
must lie in $E_k$, for $k = 1,2,3, \dots $.  Since the diameters of the sets $E_k$ have limit zero, any limit point of $\gamma $ must be $z$.

\end{proof}

Suppose $t_1$ and $t_2$ are as above, with $t_2 < t_1$.  Consider the set $\graph _2 = [f^{-1}(t_2), f^{-1}(t_1)]$.

\begin{lem} \label{ineqout}
$\graph _2$ is a finite graph containing no boundary vertices of $\graph$.
If $f^{-1}(t_1)$ and $f^{-1}(t_2)$ are considered as boundary vertices of $\graph _{t_1}$ and $\graph _{t_2}$ respectively, then
\[\sum _{v \in f^{-1}(t_1)}\partial _{\nu} f = \sum _{v \in f^{-1}(t_2)} \partial _{\nu} f .\]
\end{lem}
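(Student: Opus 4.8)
The plan is to prove the three assertions in order, treating the flux identity as the substantive one. First I would show that $\graph _2 = f^{-1}[t_2,t_1]$ is bounded away from $\partial \gbar$. Recall $t_1,t_2$ were chosen as regular values with $C < t_2 < t_1 < \min_{x \in E^c} f(x) =: y$. On the boundary, $f = C$ on $E$ while $f \ge y > t_1$ on $E^c$, so $f(\partial \gbar) \subset \{C\} \cup [y,\infty)$, which misses $[t_2,t_1]$. If some sequence $x_n \in \graph _2$ had $d(x_n,\partial \gbar) \to 0$, then compactness of $\gbar$ would furnish a subsequential limit $z \in \partial \gbar$ with $f(z) \in [t_2,t_1]$ by continuity, contradicting the previous sentence. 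Hence $\graph _2$ lies at distance at least some $\delta > 0$ from $\partial \gbar$, so $\graph _2 \subset \graph _{\delta}$, which is finite by \lemref{approx}; being a union of finitely many (sub)edges, $\graph _2$ is a finite graph, and since it avoids $\partial \gbar \supset \partial \graph$ it contains no boundary vertices of $\graph$.

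Next I would derive the flux identity from integration by parts. On $\graph _2$ the function $f$ is harmonic: $f'' = 0$ on each edge and \eqref{Kerck} holds at each interior vertex. Because $t_1,t_2$ are regular values whose level sets lie in edge interiors (\lemref{finset}), each point of $f^{-1}(t_1) \cup f^{-1}(t_2)$ is a degree-one vertex of $\graph _2$, exactly as in \lemref{posgraph}, so $\partial \graph _2 = f^{-1}(t_1) \cup f^{-1}(t_2)$. Applying \lemref{IBP} on the finite graph $\graph _2$ with $f$ and the constant function $g \equiv 1$, the terms $f''g$ and $f'g'$ both vanish and the first line of \eqref{ibp} collapses to
\[ 0 = - \sum_{v \in \partial \graph _2} \partial _{\nu} f(v), \]
the inner sum over $e \sim v$ reducing to a single term since each boundary vertex has degree one in $\graph _2$.

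The main obstacle, and the only place real care is needed, is the orientation bookkeeping relating the outward normals for $\graph _2$ to those for $\graph _{t_1}$ and $\graph _{t_2}$; I would keep the two conventions explicitly distinguished. At $v \in f^{-1}(t_2)$ the unique edge of $\graph _2$ runs into $\{ f > t_2 \}$, the same edge and direction used when $v$ is viewed as a boundary vertex of $\graph _{t_2} = \{ f \ge t_2 \}$, so the two outward derivatives agree. At $v \in f^{-1}(t_1)$, however, the edge of $\graph _2$ runs into $\{ f < t_1 \}$, the opposite direction from that used when $v$ is a boundary vertex of $\graph _{t_1} = \{ f \ge t_1 \}$, so the two differ by a sign. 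Writing $\partial _{\nu}^{\graph _2} f(v) = \partial _{\nu}^{\graph _{t_2}} f(v)$ on the $t_2$-level and $\partial _{\nu}^{\graph _2} f(v) = - \partial _{\nu}^{\graph _{t_1}} f(v)$ on the $t_1$-level and substituting into the displayed identity gives
\[ \sum_{v \in f^{-1}(t_1)} \partial _{\nu}^{\graph _{t_1}} f(v) = \sum_{v \in f^{-1}(t_2)} \partial _{\nu}^{\graph _{t_2}} f(v), \]
which is exactly the asserted equality, with both normals taken as the positive outward derivatives for $\graph _{t_1}$ and $\graph _{t_2}$.
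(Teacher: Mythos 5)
Your proof is correct and follows essentially the same route as the paper's: establish that $\graph_2$ is a finite graph avoiding $\partial\graph$ via compactness of $\gbar$ and \lemref{approx}, then apply \lemref{IBP} on $\graph_2$ with the constant test function $g\equiv 1$ and account for the sign flip of the outward derivatives at $f^{-1}(t_1)$ when reinterpreted relative to $\graph_{t_1}$. The only difference is cosmetic: the paper rules out infinitely many edges by noting a limit point would land in $E$ with $f = C$, while you bound $\graph_2$ away from $\partial\gbar$ by continuity of $f$ and embed it in a finite $\graph_\delta$ --- the same compactness idea, and your orientation bookkeeping is in fact spelled out more carefully than in the paper.
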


\begin{proof}

$\graph _2$ consists of a collection of closed edges as in the proof of \thmref{posgraph}.
The values $t_1$ and $t_2$ are chosen to be positive, but smaller than $f(x)$ for $x \in E^c$.
Thus $\graph _2$ contains no boundary vertices of $\graph $.

If $\graph _2$ had infinitely many distinct edges, there would be a sequence $x_n$ from distinct edges 
with $f(t_2) \le f(x_n) \le f(t_1)$, but with $z = \lim_{n \to \infty} x_n \in E$ by \lemref{approx}.  
This would force $f(z) = C$, which is impossible.

Since $\graph _2$ contains no boundary vertices of $\graph $, \lemref{IBP} gives
\[0 = \int_{\graph _2} - f''\cdot 1 = \sum _{v \in f^{-1}(t_2)} \partial _{\nu} f (v)+  \sum _{v \in f^{-1}(t_1)} \partial _{\nu} f(v) .\]
But the outward pointing derivatives  $\partial _{\nu} f(v)$ for $v \in f^{-1}(t_1)$ flip sign when considered as boundary vertices
for $\graph _{t_1}$, finishing the proof.
    
\end{proof}

\section{Differential operators on $L^2(\graph)$ }

Recall that $\domain _{max}$ denotes the continuous real valued functions $f$ on $\graph $ 
which have absolutely continuous derivatives on each edge $e$, with $f$ and $f''$ in $L^2(\graph )$, and which   
satisfy \eqref{Kerck} at each interior vertex.  $\domain _{min}$ denotes the functions $f \in \domain _{max}$ 
which are infinitely differentiable on the edges of $\graph$, with compact support $supp (f)$ in the interior of $\graph$.
$S_{min}$ will be the operator $\Laplace$ acting on $L^2(\graph )$ with domain $\domain _{min}$.
Any symmetric extension of $S_{min}$ will have an adjoint which is a restriction of $S_{min}^*$.

\begin{prop}
$S_{min}$ is a nonnegative symmetric operator on $L^2(\graph )$.  The adjoint $S_{min}^*$ is the operator $\Laplace$ with domain $\domain _{max}$.
\end{prop}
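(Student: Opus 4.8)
The plan is to split the statement into three parts: the symmetry and nonnegativity of $S_{min}$; the inclusion $\domain _{max} \subseteq \domain (S_{min}^*)$ together with the formula $S_{min}^* = \Laplace$ there; and the reverse inclusion $\domain (S_{min}^*) \subseteq \domain _{max}$. The first two parts are quick consequences of \lemref{IBP}. Since every $f \in \domain _{min}$ is supported in a compact subset of $\graph _{int}$, the computation takes place on a finite subgraph $\graph _0$ containing $\mathrm{supp}(f)$, and $f$ together with its derivative vanishes near the boundary of $\graph _0$. Applying \lemref{IBP} therefore annihilates all boundary terms, giving $\langle S_{min}f,f\rangle _2 = \int_{\graph}(f')^2 \ge 0$ for nonnegativity, $\langle S_{min}f,g\rangle _2 = \int_{\graph} f'g' = \langle f,S_{min}g\rangle _2$ for $f,g \in \domain _{min}$ for symmetry, and more generally $\langle S_{min}f,g\rangle _2 = \int_{\graph} f(-g'') = \langle f,\Laplace g\rangle _2$ whenever $g \in \domain _{max}$. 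This last identity shows $\domain _{max} \subseteq \domain (S_{min}^*)$ with $S_{min}^* g = -g''$.

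The substance is the reverse inclusion. Fix $g \in \domain (S_{min}^*)$, so there is $h = S_{min}^* g \in L^2(\graph )$ with $\int_{\graph}(-f'')g = \int_{\graph} fh$ for all $f \in \domain _{min}$. First I restrict the test functions $f$ to have support in the interior of a single edge $e \cong [a_e,b_e]$; the identity then reads $\int_{a_e}^{b_e} g f'' = -\int_{a_e}^{b_e} hf$, which says that the distributional second derivative of $g_e$ equals $-h_e \in L^2$. A standard one-dimensional regularity argument (integrate $-h_e$ twice and subtract the appropriate affine function) then shows that, after modification on a null set, $g_e \in C^1[a_e,b_e]$ with absolutely continuous derivative and $g_e'' = -h_e$ almost everywhere; in particular $g'' = -h \in L^2(\graph )$, and the boundary values $g_e(v)$ and $\partial _{\nu} g_e(v)$ become well defined. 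This is the step I expect to demand the most care, precisely because $g$ is a priori only an $L^2$ class whose vertex values carry no meaning until this edgewise regularity is in hand.

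With edge regularity established, I turn to the vertex conditions by admitting test functions $f$ that do not vanish near interior vertices. Working on a finite subgraph containing $\mathrm{supp}(f)$ and applying the Lagrange identity $\int_e (f_e g_e'' - f_e'' g_e) = [f_e g_e' - f_e' g_e]_{a_e}^{b_e}$ on each edge, then summing and regrouping the endpoint terms by vertex in outward-pointing coordinates, the relation $\int_{\graph}(fg'' - f''g)=0$ becomes
\begin{equation*}
\sum _{v} \Big[ -f(v) \sum _{e \sim v} \partial _{\nu} g_e(v) + \sum _{e \sim v} g_e(v)\, \partial _{\nu} f_e(v) \Big] = 0,
\end{equation*}
the sum running over the (interior) vertices met by $\mathrm{supp}(f)$; here $f(v)$ is unambiguous because $f$ is continuous, while $g_e(v)$ must still be allowed to depend on $e$. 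To extract continuity of $g$, I test at a single interior vertex $v_0$ with functions satisfying $f(v_0)=0$ that realize arbitrary prescribed values $\partial _{\nu} f_e(v_0) = c_e$ subject only to the Kirchhoff constraint $\sum _{e \sim v_0} c_e = 0$; such $f$ are built from $c_e(x-a_e)$ times a cutoff supported away from the other vertices. The displayed identity then forces $\sum _{e \sim v_0} g_e(v_0)\, c_e = 0$ for every choice with $\sum c_e = 0$, so the vector $(g_e(v_0))_{e \sim v_0}$ is constant; that is, $g$ is continuous at $v_0$.

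Finally, once $g$ is known continuous at interior vertices the second group in the displayed identity collapses, since $\sum _{e \sim v} g_e(v) \partial _{\nu} f_e(v) = g(v) \sum _{e \sim v} \partial _{\nu} f_e(v) = 0$ by the Kirchhoff condition on $f$. The identity reduces to $\sum _v f(v) \sum _{e \sim v} \partial _{\nu} g_e(v) = 0$, and testing with an $f$ equal to $1$ near one chosen interior vertex $v_0$ and vanishing near all others yields $\sum _{e \sim v_0} \partial _{\nu} g_e(v_0) = 0$, which is exactly \eqref{Kerck} for $g$. Thus $g$ is continuous, has $g'' \in L^2(\graph )$, and satisfies the interior vertex condition, so $g \in \domain _{max}$ and $S_{min}^* g = -g'' = \Laplace g$. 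Together with the first inclusion this identifies $S_{min}^*$ with $\Laplace$ on the domain $\domain _{max}$.
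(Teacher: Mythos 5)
Your proof is correct and follows essentially the same route as the paper: integration by parts on a finite subgraph for symmetry and nonnegativity, edgewise regularity of $g \in \domain(S_{min}^*)$ from test functions supported in single edges (the paper simply cites Dunford/Kato for this step, where you sketch the standard distributional argument), and the interior vertex conditions extracted via the Lagrange identity with the same kinds of test functions. The only cosmetic differences are that you establish continuity of $g$ before the Kirchhoff condition (the paper does the reverse, which its choice of test functions also permits) and that you phrase the continuity step in terms of vectors orthogonal to all mean-zero vectors rather than the paper's pairwise choice $\partial_{\nu} f_{e_m} = -1$, $\partial_{\nu} f_{e_n} = 1$.
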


\begin{proof}
If $f,g \in \domain _{min}$ there is a finite subgraph $\graph _0$ containing $supp(f) \cup supp(g) $ such that
\[\int_{\graph} (-D^2f)g = \int_{\graph _0} (-D^2f)g = \int_{\graph _0} f'g' =   \int_{\graph} f(-D^2g).\]
Thus $S_{min}$ is a nonnegative and symmetric. For each edge $e \in \edgeset$, the domain $\domain _{min}$ includes the $C^{\infty}$ functions with 
compact support in the interior of $e$. By \cite[p. 1294]{Dunford} or \cite[p. 169-171]{Kato} $S_{min}^*$ acts by $\Laplace $, and 
functions in the domain of $S_{min}^*$ have absolutely continuous derivatives on each edge $e$, with $S_{min}^*f = \Laplace f \in L^2(\graph )$.

Suppose now that $v$ is an interior vertex with incident edges $e_n = [v,w_n]$ for $n = 1,\dots ,e_N$.  Assume that
$f \in \domain _{min}$ has support in $U = \bigcup _{n=1}^N e_n$,.
Choose such an $f$ with $f=1$ in a neighborhood of $v$
and with $f(w_n)$ and $f'(w_n) $ vanishing in a neighborhood of $w_n$.  For $g $ in the domain of $S_0^*$,
\[0 = \int_{U} (-f'')g - \int _{U}f(-g'') = \sum_{e \sim v} [ f_e(v) \partial _{\nu}g_e(v) - g_e(v)\partial _{\nu} f_e(v)] = \sum_{e \sim v} \partial _{\nu}g_e(v).\]

Next, for distinct $m,n$, choose $f$ with $f(v) = 0$, but with $\partial _{\nu} f_{e_m} = -1$, $\partial _{\nu} f_{e_n} = 1$, and  $\partial _{\nu} f_{e_j} = 0$ for $j \not= m,n$.
Then $g $ in the domain of $S_0^*$ is continuous at $v$, since
\[ 0 =   \sum_{e \sim v} [ f_e(v) \partial _{\nu}g_e(v) - g_e(v)\partial _{\nu} f_e(v)] = g_{e_m}(v) - g_{e_n}(v) .\]

\end{proof}

The domain $\domain _{min}$ will now be extended, with the aim of finding novel self-adjoint operators $\Laplace$.
Start with a finite partition ${\cal E}(0) = \{ E_{n,0}, n = 0, \dots , N \}$ of $\partial \gbar $ by nonempty clopen sets.  
Assume that for some $\delta > 0$ the neighborhoods $N_{\delta }(E_{n,0})$ are pairwise disjoint, and 
have finite volume for $n = 1,\dots , N$; neighborhoods of $E_{0,0}$ may have infinite volume.
Also choose a collection ${\cal H} = \{h_n, n = 1, \dots , N\} $ of functions, with $h_n$ defined and 
harmonic in $N_{\delta }(E_{n,0})$; initially it will suffice to assume that $h_n \in L^2( N_{\delta }(E_n))$. 

Define a domain $\domain _{{\cal E}(0),{\cal H}}$ of functions which are constant multiples of $h_n$ in some neighborhood $N_{\epsilon }(E_{n,0})$. 
More precisely $\phi \in \domain _{{\cal E}(0),{\cal H}}$ if $\phi \in \domain _{max} $ and for some $\epsilon$ with $0 < \epsilon < \delta $, 

(i) $\phi (x) = 0$ for $x \in N_{\epsilon }(E_{0,0})$, and

(ii) there are constants $c_{n}$ (depending on $\phi $) such that $\phi (x) = c_n h_n(x)$ for $x \in N_{\epsilon}(E_{n,0})$.

Note that $\domain _{{\cal E}(0),{\cal H}}$ is a dense (algebraic) subspace of $L^2(\graph)$.  
The finite volume assumption for $N_{\delta}(E_n)$, $n =1,\dots ,N,$ insures that 
$\phi \in L^2(\graph)$ if the $h_n \in \cal H$ are bounded.   
Let $S_{{\cal E}(0),{\cal H}}$ denote the operator on $L^2(\graph )$ acting by $\Laplace$ 
with domain $\domain _{{\cal E}(0),{\cal H}}$.  The collection ${\cal H}$ serves as boundary conditions on ${\cal E}(0)$. 
(If $vol (\graph ) $ is finite, $E_0$ and condition (i) may be dropped.)

The partition and domain for $\Laplace$ will be extended inductively.
Suppose ${\cal E}(j) = \{ E_{m,j} \}$ is an already defined finite partition of $\partial \gbar$ by nonempty clopen sets.
Form the partition ${\cal E}(j+1)$ by partitioning the sets $E_{m,j}$ into two nonempty disjoint clopen sets $E_{m,j,1}$ and $E_{m,j,2}$
and putting these into ${\cal E}(j+1)$.  If $E_{m,j}$ may not be partitioned in this way, add it unchanged to ${\cal E}(j+1)$. 

The associated collection ${\cal H}(j+1)$ of harmonic functions is obtained by assigning the function $h_{m,j}$ previously assigned to $E_{m,j}$
to both sets  $E_{m,j,1}$ and $E_{m,j,2}$.  As above, $\phi \in \domain _{{\cal E}(j+1),{\cal H}(j+1)}$ if there are constants $c_{m,j,1}$ and $c_{m,j,2}$ such that 
$\phi (x) = c_{m,j,1} h_{m,j}(x)$ for $x$ in some $\epsilon $ neighborhood $N_{\epsilon}(E_{n,1})$ and $\phi (x) = c_{m,j,2} h_{m,j}(x)$ for $x \in N_{\epsilon}(E_{n,2})$.
Note that the collection ${\cal H}(j+1)$ of harmonic functions is only assigning a function $h_{m,j}$ to the subsets of $E_{m,j}$, 
although as sets are split the constants need not be the same.  
However, if $c_{m,j,1} = c_{m,j,2}$ for all $m$, a subspace of $\domain _{{\cal E}(j+1),{\cal H}(j+1)}$ is naturally identified with 
$\domain _{{\cal E}(j),{\cal H}(j)}$.  The operators $S_{{\cal E}(j),{\cal H}(j)}$ act by $\Laplace$ with the increasing domains $\domain _{{\cal E}(j),{\cal H}(j)}$.
The operator $S_U$ will act by $\Laplace$ with the domain $\bigcup _j \domain _{{\cal E}(j),{\cal H}(j)}$.

\begin{thm} \label{Symops}
The operators $S_{{\cal E}(j),{\cal H}(j)}$ and $S_U$ are symmetric.  They have self-adjoint extensions.  
\end{thm}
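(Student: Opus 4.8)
The plan is to establish symmetry by reducing the commutator $\langle S\phi,\psi\rangle-\langle\phi,S\psi\rangle$ (where $S$ denotes $S_{{\cal E}(j),{\cal H}(j)}$ or $S_U$) to a boundary flux across a carefully chosen finite subgraph, and then to obtain self-adjoint extensions from von Neumann's theorem via the natural conjugation on $L^2(\graph)$.

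First I would fix $\phi,\psi$ lying in a single domain $\domain_{{\cal E}(j),{\cal H}(j)}$; for $S_U$ this is no loss, since the domains $\domain_{{\cal E}(j),{\cal H}(j)}$ are increasing, so any two elements of $\bigcup_j\domain_{{\cal E}(j),{\cal H}(j)}$ already lie in a common one. Writing the relevant partition as $\{E_m\}$ with assigned harmonic functions $h_m$, I would choose $\epsilon$ small enough that $\phi$ and $\psi$ both vanish on the neighborhoods of the Dirichlet sets (descended from $E_{0,0}$) and satisfy $\phi=c_m h_m$, $\psi=d_m h_m$ with the \emph{same} $h_m$ on each remaining $N_\epsilon(E_m)$. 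Because each $h_m$ is harmonic and both functions are zero near the Dirichlet sets, $\Laplace\phi=-\phi''$ and $\Laplace\psi=-\psi''$ vanish throughout the collar $\bigcup_m N_\epsilon(E_m)$; hence their supports lie in the compact set $K=\gbar\setminus\bigcup_m N_\epsilon(E_m)\subset\graph_{int}$.

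The key step is to produce a finite subgraph $\graph_0\supset K$ all of whose boundary vertices lie in the collar. Since $K$ is at positive distance from $\partial\gbar$, such a $\graph_0$ can be extracted from $\graph_{\delta'}$ (finite by \lemref{approx}) for $\delta'$ small, using compactness, the pairwise disjointness of the $N_\delta(E_m)$, and the identity $N_\epsilon(\partial\gbar)=\bigcup_m N_\epsilon(E_m)$. On $\graph_0$ the interior vertices are interior vertices of $\graph$ at which \eqref{Kerck} holds, so \lemref{IBP} applies; as $\phi''=\psi''=0$ off $\graph_0$, it yields
\[
\langle S\phi,\psi\rangle-\langle\phi,S\psi\rangle
= -\sum_{v\in\partial\graph _0}\sum_{e\sim v}\big[\phi(v)\,\partial_\nu\psi_e(v)-\psi(v)\,\partial_\nu\phi_e(v)\big].
\]
At a boundary vertex $v$ in a Dirichlet neighborhood the summand vanishes because $\phi,\psi$ and their derivatives do; at a vertex $v\in N_\epsilon(E_m)$ with $m$ non-Dirichlet we have $\phi=c_m h_m$ and $\psi=d_m h_m$ near $v$ with the same $h_m$, so each summand equals $c_md_m[h_m(v)\partial_\nu h_{m,e}(v)-h_m(v)\partial_\nu h_{m,e}(v)]=0$. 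Thus the flux vanishes and $S$ is symmetric. The decisive feature is that when a clopen set is split the \emph{same} harmonic function is reassigned to both pieces, which is precisely what forces this Wronskian-type term to cancel.

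For the existence of self-adjoint extensions I would complexify $L^2(\graph)$ and invoke von Neumann's conjugation criterion. The map $C:f\mapsto\overline f$ is an antilinear involution commuting with $\Laplace=-D^2$, and because every $h_m$ is real $C$ preserves each domain (if $\phi=c_m h_m$ near $E_m$ then $\overline\phi=\overline{c_m}h_m$, while the conditions \eqref{Kerck} and vanishing near the Dirichlet sets are conjugation-invariant). A densely defined symmetric operator commuting with a conjugation has equal deficiency indices, hence admits self-adjoint extensions; the same conjugation works verbatim for $S_U$. I expect the main obstacle to be the finite-subgraph construction: rigorously producing a $\graph_0$ whose \emph{entire} boundary sits inside the collar and justifying the passage from the integral over the infinite $\graph$ to the finite integration by parts of \lemref{IBP}. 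Once that is in place, the termwise cancellation and the conjugation argument are routine.
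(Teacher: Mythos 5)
Your proposal is correct and follows essentially the same route as the paper: symmetry is obtained by reducing to a finite subgraph (via \lemref{approx}), applying \lemref{IBP}, and cancelling the Wronskian-type boundary terms because both functions are multiples of the \emph{same} harmonic function $h_m$ near each clopen set. Your von Neumann conjugation argument for the existence of self-adjoint extensions is precisely the content of the results the paper cites ([RS2, p.~349] on the real space, or its complexification), so that step too matches the paper, just spelled out in more detail.
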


\begin{proof}
Suppose $f,g \in \domain _{{\cal E}(j),{\cal H}(j)}$.  Since both are harmonic in a neighborhood of $\partial \gbar $, 
there is a finite graph $\graph _{\epsilon}$ as in \lemref{approx} such that \lemref{IBP} gives
\[\int_{\graph} (D^2f)g - \int_{\graph} f(D^2g)  = \int_{\graph _{\epsilon}} (D^2f)g - \int_{\graph _{\epsilon}} f(D^2g)\]
\[= \sum _{v \in \partial \graph _{\epsilon } }\sum_{e \sim v}[f(v) \partial _{\nu} g_e(v) - g(v) \partial _{\nu}f_e(v)] .\]
$\graph _{\epsilon}$ can be chosen so that near the boundary vertices $v \in N_{\epsilon} (E_{m,j})$ there are constants $c_f,c_g$ such that $f = c_fh_n$ and $g = c_gh_n$.
Symmetry follows from 
\[  f(v)\partial _{\nu} g_e(v) - g(v) \partial _{\nu}f_e(v) 
= c_fc_g h_n(v) \partial _{\nu} h_{n,e}(v) - c_gc_f h_n(v) \partial _{\nu} h_{n,e}(v) = 0.\]

The operators $S_{{\cal E}(j),{\cal H}(j)}$ and $S_U$ are densely defined and symmetric on the real Hilbert space $L^2(\graph )$.  
All such operators have  \cite[p. 349]{RS2} self-adjoint extensions on $L^2_{\real}(\graph )$ or \cite[p. 1231]{Dunford} on the 
complexification $L_{\complex}^2(\graph)$.

\end{proof}

It is often useful to know that a densely defined symmetric operator $S$ is nonnegative, since $S$ will then have a
distinguished self-adjoint extension, the Friedrichs extension. 
To establish nonnegativity of the symmetric operators $S_{{\cal E}(j),{\cal H}(j)}$, restrictions are placed on the harmonic functions.
Choose a collection ${\cal K} = \{ k_n, n  = 1,\dots ,N \}$ of harmonic functions as before, with each $k_n$ satisfying
(i) each $k_n$ has a constant value $C_n >0$ on $E_n$, (ii) $k_n \in H_{fin}$, and (iii) $\partial _{\nu} k_n(x) > 0$ 
for $x$ in the level sets $\{ k_n(x)  = t_n  \}$ for $t_n$ sufficiently close to $C_n$.  The existence of such functions $k_n$ is established
in \lemref{posgraph} and \thmref{Eharm}.
 
\begin{thm} \label{Rops}
$S_{{\cal E}(j),{\cal K}(j)}$ and the corresponding $S_U$ are nonnegative.  There are functions $f$ in the domain of  $S_{{\cal E}(j),{\cal K}(j)}$ with quadratic form 
$\langle S_{{\cal E}(j),{\cal K}(j) } f,f \rangle _2$ strictly larger than $\int_{\graph } (f')^2$.
\end{thm}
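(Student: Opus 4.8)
The plan is to compute the quadratic form $\langle S_{{\cal E}(j),{\cal K}(j)}f,f\rangle_2 = \int_{\graph}(-D^2f)f$ exactly, showing it equals $\int_{\graph}(f')^2$ plus a nonnegative boundary contribution that is strictly positive whenever $f$ is a nonzero multiple of some $k_n$ near the corresponding clopen set. Throughout write $f = c_n k_n$ on $N_\epsilon(E_n)$ and $f=0$ on $N_\epsilon(E_0)$, as in the definition of the domain. First I would localize the computation to a finite graph. For each $n$, using condition (iii) together with \lemref{finset}, \lemref{goodt}, and \lemref{posgraph}, choose a regular value $t_n > C_n$ close to $C_n$ so that $\Gamma_n = k_n^{-1}(t_n)$ is a finite set of edge-interior points with $\partial_{\nu}k_n(v) > 0$ (normal into $\{k_n > t_n\}$), and so that $\{k_n \le t_n\}$ is a neighborhood of $E_n$ lying inside $N_\epsilon(E_n)$. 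Set $B_n = \{k_n \le t_n\}$ and $\graph_\sharp = \gbar \setminus \big(N_\epsilon(E_0)\cup\bigcup_n\{k_n < t_n\}\big)$. Since the removed sets are neighborhoods of all of $\partial\gbar$, the graph $\graph_\sharp$ is bounded away from $\partial\gbar$ and hence finite by \lemref{approx}, with $\partial\graph_\sharp$ consisting of $\bigcup_n\Gamma_n$ together with points where $f=0$.

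Next I would apply \lemref{IBP} on $\graph_\sharp$. Because $f$ is harmonic on each $B_n$ and vanishes on $N_\epsilon(E_0)$, we have $-D^2f = 0$ off $\graph_\sharp$, so
\[ \int_{\graph}(-D^2f)f = \int_{\graph_\sharp}(f')^2 + \sum_{v\in\partial\graph_\sharp}\sum_{e\sim v}f(v)\partial_{\nu}f_e(v). \]
The terms at the $f=0$ vertices vanish. On $\Gamma_n$ one has $f(v)=c_nt_n$ and $\partial_{\nu}f_e(v)=c_n\partial_{\nu}k_{n,e}(v)$, so the remaining terms collapse to $c_n^2 t_n\Phi_n$, where $\Phi_n = \sum_{v\in\Gamma_n}\sum_{e\sim v}\partial_{\nu}k_{n,e}(v) > 0$ is the positive flux of $k_n$ through $\Gamma_n$ furnished by \lemref{posgraph}. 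This already gives $\langle S f,f\rangle_2 = \int_{\graph_\sharp}(f')^2 + \sum_n c_n^2 t_n\Phi_n \ge 0$, proving nonnegativity of $S_{{\cal E}(j),{\cal K}(j)}$; the same bound on the increasing union of domains proves nonnegativity of $S_U$.

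For the strict inequality I would recover the Dirichlet energy of $f$ hidden in the neighborhoods $B_n$. On each finite region $\{s\le k_n\le t_n\}$ between two regular level sets, \lemref{IBP} applied to the harmonic $k_n$, together with the flux-conservation identity \lemref{ineqout}, gives after a short computation $\int_{\{s\le k_n\le t_n\}}(k_n')^2 = (t_n - s)\Phi_n$. Letting $s\downarrow C_n$ exhausts $B_n$ and yields $\int_{B_n}(k_n')^2 = (t_n - C_n)\Phi_n$. Since $\int_{\graph}(f')^2 = \int_{\graph_\sharp}(f')^2 + \sum_n c_n^2\int_{B_n}(k_n')^2$, subtracting produces the clean identity
\[ \langle S_{{\cal E}(j),{\cal K}(j)}f,f\rangle_2 = \int_{\graph}(f')^2 + \sum_n C_n\Phi_n\, c_n^2. \]
Because $C_n > 0$ and $\Phi_n > 0$, any admissible $f$ with some $c_n\ne 0$ satisfies $\langle Sf,f\rangle_2 > \int_{\graph}(f')^2$, and such $f$ \emph{exist} in the domain, which is all that is claimed.

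The main obstacle I anticipate is the limiting argument of the third paragraph: the region $B_n$ abuts $\partial\gbar$, so the energy identity must be obtained by exhausting $B_n$ with the finite subgraphs $\{s\le k_n\le t_n\}$ as $s\downarrow C_n$ and justifying passage to the limit. This is where the hypotheses $k_n\in H_{fin}$ and that the minimum value $C_n$ is attained only on $\partial\gbar$ (by the maximum principle, so the regions exhaust $B_n$ outside the locus $k_n'=0$) do the real work, while \lemref{ineqout} is what makes the finite-region energies telescope into $(t_n-C_n)\Phi_n$. The bookkeeping for the refined partitions ${\cal E}(j)$ and for $S_U$ is routine: each clopen piece carries its own constant $c$ and its own positive sub-flux through the surrounding component of the level set, and the displayed identity holds verbatim with the sum taken over all pieces of the partition.
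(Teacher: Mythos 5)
Your proposal is correct and takes essentially the same route as the paper: localize to a finite graph bounded by the level sets $k_n^{-1}(t_n)$ (and a cut-off near $E_0$), apply \lemref{IBP} to write the quadratic form as $\int (f')^2$ over that finite graph plus boundary flux terms that are nonnegative by \lemref{posgraph}, and then invoke the flux constancy of \lemref{ineqout} to get the strict inequality against $\int_{\graph}(f')^2$. Your explicit identity $\int_{B_n}(k_n')^2 = (t_n - C_n)\Phi_n$, and the resulting formula $\langle Sf,f\rangle_2 = \int_{\graph}(f')^2 + \sum_n C_n \Phi_n c_n^2$, is simply a worked-out version of the paper's terser concluding step, where the limit $t_n \downarrow C_n$ is taken directly in \eqref{Qform}.
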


\begin{proof}
Replace the graph $\graph _{\epsilon }$ of \thmref{Symops} with a graph $\graph _T$ bounded by the level sets $\{ k_n(x)  = t_n  \}$
and the set $\{ x, d(x,E_0) = t_0 \}$.  By \lemref{IBP}, for $t_m$ sufficiently small, $m=0,\dots ,N$ the quadratic form for  $S_{{\cal E}(j),{\cal K}(j)}$ is 
\begin{equation} \label{Qform}
\int_{\graph } (-D^2f) f =  \sum_{n=1}^N \sum _{v \in k_n^{-1}(t_n)} f\partial _{\nu} f + \int _{\graph _T} (f')^2
\end{equation}
\[ =   \sum_{n=1}^N  \sum _{v \in k_n^{-1}(t_n)} c_{n,f}^2 k_n(v)  \partial _{\nu} k_n(v) + \int _{\graph _T} (f')^2 \ge 0.\]

By \lemref{ineqout}, the sums $ \sum _{v \in k_n^{-1}(t_n)} \partial _{\nu}k_n(v)$ will have a fixed positive value as $t_n \downarrow C_n$.
Consequently, the quadratic form for $S_{{\cal E}(j),{\cal K}(j)}$ will be strictly larger than $\int _{\graph}(f')^2$ for functions in $\domain _{{\cal E}(j),{\cal K}(j)}$ with 
$c_{n,f}^2 > 0$.

\end{proof}

When $S_{{\cal E}(j),{\cal K}(j)}$ is nonnegative it has \cite[pp. 313-326]{Kato} a nonnegative self-adjoint Friedrichs extension $\dop _{{\cal E}(j),{\cal K}(j)}$.
The quadratic form for $S_{{\cal E}(j),{\cal K}(j)} + I$ provides an inner product,
$\langle f,g \rangle _F = \langle S_{{\cal E}(j),{\cal K}(j)} f,g \rangle _2 + \langle f,g \rangle _2 $.
Then  \cite[p. 322]{Kato} the completion of $ \domain _{{\cal E}(j),{\cal K}(j)}$ with respect to the inner product  $\langle f,g \rangle _F$ yields a 
closed form $\tau [f,g]$ extending $\langle S_{{\cal E}(j),{\cal K}(j)}f,g \rangle _2$, with the associated Friedrichs extension $\dop _{{\cal E}(j),{\cal K}(j)}$
having a domain contained in the form domain of $\tau $.

\begin{cor}
If $f$ is in the completion of $ \domain _{{\cal E}(j),{\cal K}(j)}$ with respect to the inner product  $\langle f,g \rangle _F$,
then $f$ is continuous on $\gbar$ and absolutely continuous on the edges of $\graph $, with $f' \in L^2(\graph )$.

Suppose for $n = 1,\dots ,N$ the functions $h_n \in {\cal H}$ are constants.  Then $\tau [f,f] =  \int_{\graph} (f')^2 $.
\end{cor}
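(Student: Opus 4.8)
The plan is to first establish that the $\langle\,\cdot\,,\cdot\,\rangle_F$-norm is comparable to the energy expression $\int_{\graph}(f')^2 + \|f\|_2^2$, and then to rerun the uniform-convergence argument of \propref{isHilbert}. From the explicit quadratic form \eqref{Qform} computed in the proof of \thmref{Rops}, a function $f \in \domain_{{\cal E}(j),{\cal K}(j)}$ with constants $c_{n,f}$ satisfies $\langle f,f\rangle_F = \int_{\graph_T}(f')^2 + \sum_{n=1}^N c_{n,f}^2 B_n + \|f\|_2^2$, where $B_n = \sum_{v \in k_n^{-1}(t_n)} k_n(v)\,\partial _{\nu} k_n(v)$. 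By \lemref{ineqout} and the positivity condition (iii) defining ${\cal K}$, each $B_n$ is a fixed strictly positive constant, whereas $\int_{\graph}(f')^2 = \int_{\graph_T}(f')^2 + \sum_{n=1}^N c_{n,f}^2 D_n$ with $D_n = \int_{\{k_n \le t_n\}}(k_n')^2 < \infty$ because $k_n \in H_{fin}$. Comparing these two expressions termwise with the fixed constant $C = \max\{1,\ \max_n D_n/B_n\}$ gives $\int_{\graph}(f')^2 \le C\langle f,f\rangle_F$ on the dense domain, and together with the trivial bound $\|f\|_2^2 \le \langle f,f\rangle_F$ this is the comparison I need.

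Next I would extract continuity and absolute continuity. Given a $\langle\,\cdot\,,\cdot\,\rangle_F$-Cauchy sequence $\{f_m\} \subset \domain_{{\cal E}(j),{\cal K}(j)}$, applying the comparison to the differences $f_m - f_l$ (which lie in the same subspace) makes both $\{f_m\}$ and $\{f_m'\}$ Cauchy in $L^2(\graph)$. The pointwise values are anchored by condition (i): each $f_m$ vanishes on $N_\epsilon(E_{0,0}) \supseteq E_{0,0}$, so for a fixed $x_0 \in E_{0,0}$ one has $f_m(x_0) = 0$ for all $m$. Integrating $f_m' - f_l'$ along a path $\gamma$ from $x_0$ to $x$ of length at most $2\,{\rm diam}(\graph)$, exactly as in \propref{isHilbert}, yields $|f_m(x) - f_l(x)| \le (2\,{\rm diam}(\graph))^{1/2}\,\|f_m' - f_l'\|_{L^2(\graph)}$, so $\{f_m\}$ converges uniformly to a continuous limit $f$ on $\gbar$. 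Writing $g$ for the $L^2(\graph)$-limit of $\{f_m'\}$ and passing to the limit in $f_m(x) = \int_\gamma f_m'$ shows $f(x) = \int_\gamma g$, so $f$ is absolutely continuous on each edge with $f' = g \in L^2(\graph)$, proving the first assertion. (If $vol(\graph) < \infty$ and $E_{0,0}$ is dropped, the $L^2(\graph)$-Cauchy property instead furnishes an anchor point $x_0 \in \graph$ at which $\{f_m(x_0)\}$ is Cauchy, and the argument is unchanged.)

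For the second assertion, suppose the functions $h_n$ are constants. Then every $f$ in the domain is constant near each $E_{n,0}$ and vanishes near $E_{0,0}$, so $f' = 0$ in a neighborhood of $\partial\gbar$ and the normal derivatives $\partial_\nu f_e(v)$ at the boundary vertices of a truncating finite graph $\graph_\epsilon$ all vanish. Hence \lemref{IBP} leaves no boundary term and gives $\langle S_{{\cal E}(j),{\cal H}(j)} f,f\rangle_2 = \int_{\graph}(f')^2$ on the dense domain. Since $\tau$ is the closed form extending $\langle S_{{\cal E}(j),{\cal H}(j)} f,f\rangle_2$, and since by the first assertion the functional $f \mapsto \int_{\graph}(f')^2$ is $\langle\,\cdot\,,\cdot\,\rangle_F$-continuous (the energy is dominated by the $F$-norm and $f_m' \to f'$ in $L^2(\graph)$), both $\tau[f,f]$ and $\int_{\graph}(f')^2$ are continuous functionals agreeing on a dense subspace, and therefore coincide.

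I expect the main obstacle to be the norm comparison of the first paragraph — concretely, verifying that the boundary constants $B_n$ are bounded below by positive numbers, so that the $F$-norm controls the full energy $\int_{\graph}(f')^2$ rather than only the truncated energy $\int_{\graph_T}(f')^2$; this is exactly where \lemref{ineqout} and positivity condition (iii) on ${\cal K}$ are indispensable. Once the comparison and the boundary anchor from condition (i) are in hand, the remainder is a direct transcription of \propref{isHilbert}.
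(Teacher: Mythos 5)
Your proposal is correct and takes essentially the same route as the paper: a bound on the full energy by the $F$-norm extracted from \eqref{Qform}, a rerun of the completeness argument of \propref{isHilbert} (the paper gets its anchor values from the $\int_{\graph} f^2$ term rather than from a point of $E_{0,0}$, a cosmetic difference), and, for the constant case, the vanishing of the boundary terms plus density. The only step to tighten is your claim that $B_n$ and $D_n$ are fixed constants: the admissible truncation level $t_n$ depends on $f$, so you should note via \lemref{ineqout} that $B_n = t_n \Phi_n$ and $D_n = (t_n - C_n)\Phi_n$, where the flux $\Phi_n = \sum_{v \in k_n^{-1}(t_n)} \partial _{\nu} k_n(v) > 0$ is independent of the level, whence $D_n/B_n < 1$ uniformly in $f$ --- which in fact yields the paper's stated bound $\| f \|_F^2 \ge \int_{\graph} f^2 + (f')^2$ with constant $1$.
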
 

\begin{proof}
The function $f$ is the limit of a Cauchy sequence $f_n \in \domain _{{\cal E}(j),{\cal K}(j)}$ with respect to the norm of the inner product $\langle f,g \rangle _F$.
By \eqref{Qform}, 
\[\| f \|_F ^2 \ge  \int_{\graph} f ^2+ (f')^2 .\]
Following the proof of \propref{isHilbert} with $\int_{\partial \gbar}  f^2 \ d\mu $ replaced by $\int_{\graph} f^2$, one concludes
that $f$ is continuous on $\gbar$ and absolutely continuous on the edges of $\graph $, with $f' \in L^2(\graph )$. 
In case $h_n$ is replaced by a constant, \eqref{Qform} simplifies to 
\[\langle f,f \rangle _F = \int_{\graph} f^2 + (f')^2.\]

\end{proof}

The quadratic forms thus provide a way to distinguish the Friedrichs extensions for various harmonic functions $h$.

\begin{cor} The Friedrichs extensions of $\dop _{{\cal E}(j),{\cal K}(j)}$ are distinct from the Friedrichs extensions of $S_{{\cal E},{\cal H}}$ 
obtained when the functions $h_n \in {\cal H}$ are constants. 
\end{cor}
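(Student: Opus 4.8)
The plan is to use the uniqueness of the correspondence between a semibounded self-adjoint operator and its closed quadratic form: the Friedrichs extension is the operator represented by the closure of $\langle S \cdot, \cdot \rangle _2$ on the initial domain, so two coinciding Friedrichs extensions must have the same closed form on a common form domain. The strategy is therefore to exhibit a single function on which the two forms disagree. Write $\tau _{\cal K}$ for the closed form of $\dop _{{\cal E}(j),{\cal K}(j)}$ and $\tau _{\cal H}$ for the closed form of the Friedrichs extension obtained when the harmonic functions $h_n$ are constants.

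First I would select a function $f$ in the initial domain $\domain _{{\cal E}(j),{\cal K}(j)}$ whose constants satisfy $c_{n,f}^2 > 0$ for some $n$. By \thmref{Rops} --- whose proof uses \lemref{ineqout} to guarantee that the boundary sums $\sum _{v \in k_n^{-1}(t_n)} \partial _{\nu} k_n(v)$ are fixed positive quantities as $t_n \downarrow C_n$ --- the quadratic form obeys
\[
\tau _{\cal K}[f,f] = \langle S_{{\cal E}(j),{\cal K}(j)} f, f \rangle _2 > \int _{\graph} (f')^2 .
\]
Since $\domain _{{\cal E}(j),{\cal K}(j)}$ is a core for $\tau _{\cal K}$, this inequality already holds on the form core, so no limiting argument is required and the strict excess over $\int _{\graph}(f')^2$ is genuine.

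Next I would invoke the preceding corollary: when the harmonic functions are replaced by constants, the boundary terms in \eqref{Qform} vanish, since a constant has vanishing normal derivative, and the closed form reduces to $\tau _{\cal H}[g,g] = \int _{\graph} (g')^2$ for every $g$ in its form domain. Now suppose, for contradiction, that $\dop _{{\cal E}(j),{\cal K}(j)}$ coincided with this constant-case Friedrichs extension. Then $\tau _{\cal K}$ and $\tau _{\cal H}$ would be one and the same closed form, with the same form domain. The function $f$ lies in the form domain of $\tau _{\cal K}$, hence in this common form domain, and evaluating the single form two ways gives $\int _{\graph}(f')^2 = \tau _{\cal H}[f,f] = \tau _{\cal K}[f,f] > \int _{\graph}(f')^2$, a contradiction. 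The corresponding statement for $S_U$ follows at once, since $f$ remains in the larger union domain.

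The main obstacle is conceptual rather than computational: the two operators are built from different initial domains, so one cannot directly compare the forms on a fixed $f$, because $f$ need not belong to the constant-case domain. The resolution is to assume the operators are equal, which forces equality of their (uniquely associated) closed forms and of the corresponding form domains, and only then to exploit that $f$ --- a member of the ${\cal K}$-domain --- must also be admissible for $\tau _{\cal H}$. I would be careful to cite the first representation theorem for closed semibounded forms to justify the operator-to-form uniqueness, and to stress that the strict inequality of \thmref{Rops} occurs already on the form core, so the contradiction does not depend on any delicate behavior of the closures.
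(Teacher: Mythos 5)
Your proof is correct and is essentially the paper's own argument: the paper states this corollary without a separate proof, treating it as an immediate consequence of \thmref{Rops} (which gives $\langle S_{{\cal E}(j),{\cal K}(j)}f,f\rangle_2 > \int_{\graph}(f')^2$ for $f$ in the initial domain with some $c_{n,f}^2>0$) and the preceding corollary (which gives $\tau[f,f]=\int_{\graph}(f')^2$ in the constant case), combined with the standard one-to-one correspondence between nonnegative self-adjoint operators and their closed quadratic forms. Your explicit step of passing to the common form domain before comparing the two forms is precisely the right way to make rigorous the comparison that the paper leaves implicit in the sentence ``The quadratic forms thus provide a way to distinguish the Friedrichs extensions.''
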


\bibliographystyle{amsalpha}

\end{document}